\documentclass[11pt]{article}

\usepackage{amsmath}
\usepackage{amssymb}
\usepackage{amsfonts}
\usepackage[dvipdfmx]{graphicx}
\usepackage{mathtools}
\usepackage[round]{natbib}
\usepackage{fullpage}
\usepackage{float}
\usepackage{svg}

\usepackage{amsthm}
\theoremstyle{plain}

\newtheorem{thm}{Theorem}
\newtheorem{lem}{Lemma}

\newtheorem{remark}{Remark}

\theoremstyle{definition}
\newtheorem{asm}{Assumption}
\newtheorem{dfn}{Definition}
\newtheorem{exm}{Example}

\renewcommand{\d}{\,\mathrm{d}}
\let\div\relax
\DeclareMathOperator{\div}{\nabla_{\mathit{x}}\,\cdot\,}
\DeclareMathOperator{\divm}{\nabla_{\M}\,\cdot\,}
\newcommand{\A}{\mathcal{A}}
\DeclareMathOperator{\trace}{tr}
\DeclarePairedDelimiter{\ang}{\langle}{\rangle}
\DeclarePairedDelimiter{\norm}{\lVert}{\rVert}

\newcommand{\M}{\mathcal{M}}
\newcommand{\R}{\mathbb{R}}
\renewcommand{\S}{\mathbb{S}}
\newcommand{\X}{\mathfrak{X}}
\DeclareMathOperator{\E}{E}

\DeclareMathOperator{\AVar}{AVar}
\newcommand{\score}{\nabla_{x}\log q}
\newcommand{\scorem}{\nabla_{\M}\log q}
\newcommand{\smallop}[1][n^{-1/2}]{o_{\mathrm{p}}\left(#1\right)}
\newcommand{\largeop}[1][n^{-1/2}]{O_{\mathrm{p}}\left(#1\right)}

\newcommand{\warrow}{\rightsquigarrow}

\title{The geometry of Stein's method of moments:\\A canonical decomposition via score matching}
\date{}
\author{Mitsuki Nagai\thanks{The Graduate University for Advanced Studies, SOKENDAI, e-mail: \texttt{nagai.mitsuki@ism.ac.jp}} \and Keisuke Yano\thanks{The Institute of Statistical Mathematics, e-mail: \texttt{yano@ism.ac.jp}}}

\begin{document}

\maketitle

\begin{abstract}
In this paper, we elucidate the geometry of Stein's method of moments (SMoM).
SMoM is a parameter estimation method based on the Stein operator, and yields a wide class of estimators that do not depend on the normalizing constant.
We present a canonical decomposition of an SMoM estimator after centering the score matching estimator, which sheds light on the central role of the score matching within the SMoM framework.
Using this decomposition, we construct an SMoM estimator that improves upon the score matching estimator in the asymptotic variance.
We also discuss the connection between SMoM and the Wasserstein geometry.
Specifically,
using the Wasserstein score function, we provide a geometrical interpretation of the gap in the asymptotic variance between the score matching estimator and the maximum likelihood estimator.
Furthermore, it is shown that the score matching estimator is asymptotically efficient if and only if the Fisher score functions span the same space as the Wasserstein score functions.
\end{abstract}

\section{Introduction}
We consider estimation of statistical models whose probability density $q_{\theta}$ has the form
\[q_\theta(x)=\frac{1}{Z(\theta)}\tilde{q}_\theta(x),\]
where $\tilde{q}_\theta$ is a non-normalized density, and $Z(\theta)\coloneqq\int\tilde{q}_{\theta}(x)\d x$.
Such models are known as non-normalized models or energy-based models, and appear in various fields \citep[cf.][]{LeCun2006,Song2021}.
Although flexible, non-normalized models often involve a computationally intractable normalizing constant $Z(\theta)$,
and thus the maximum likelihood estimator (MLE) is computationally infeasible for non-normalized models.

\citet{Hyvarinen2005} introduced 
score matching as an estimation method for non-normalized models.
Score matching estimates the parameter by minimizing the distance between the score function $\score_{\theta}$ and the true one $\score_{\theta^\star}$.
This procedure does not depend on the normalizing constant or the true score function $\score_{\theta^\star}$ since the minimization problem can be rewritten in a form that avoids $\score_{\theta^\star}$ using integration by parts.

Recently, \citet{Ebner2025} proposed 
yet another approach for non-normalized models,
Stein's method of moments (SMoM).
This approach utilizes a Stein operator $\A_{\theta}$, an operator satisfying the identity $\E_{\theta}[\A_{\theta}f]=0$ for a class of test functions $f$ on $\R^p$.
In this paper, we allow test functions $f_{\theta}$ that may depend on $\theta$.
Since the Stein operator is not unique, we need to choose an appropriate one, and employ the divergence-based Stein operator \citep{Mijoule2023}:
\[\A_{\theta}f_\theta\coloneqq\frac{\div(q_{\theta}f_\theta)}{q_{\theta}}, \quad f_\theta\colon\R^p\to\R^p.\]
Under the boundary condition $\lim_{\norm{x}\to\infty}q_{\theta}(x)f_\theta(x)=0$, we have $\E_{\theta}[\A_{\theta}f_\theta]=0$.
This Stein operator does not depend on the normalizing constant, implying that SMoM yields a wide class of normalizing-constant-free estimators.

In this paper, we provide a geometric framework for analyzing SMoM on the basis of score matching.
We derive the asymptotic linear representation of SMoM estimators after centering the score matching estimator (Theorem~\ref{thm:smom_decomp}).
The representation reveals that, within the framework of SMoM, the relevant orthogonal structure arises from the space of test functions, 
rather than from the space of estimating functions \citep[e.g.,][]{Bickel1993}.
Using the difference of orthogonality between these spaces, we construct an SMoM estimator that improves upon the score matching estimator in the asymptotic variance (Theorem~\ref{thm:improve}),
and show the condition for the asymptotic efficiency of the score matching estimator (Theorem~\ref{thm:lowerbound}).

One intriguing aspect of our construction is its connection to the Wasserstein geometry \citep[e.g.,][]{Otto2000,Otto2001,Li2023,Amari2024,Ay2025,Nishimori2025,Trillos2025}. 
In fact, our construction of the SMoM estimator improving the score matching estimator 
admits a geometric interpretation as the approach to the space spanned by the Wasserstein score functions \citep{Li2023}.
Also, using the Wasserstein score function, we show that the gap in the asymptotic variance between the score matching estimator and the MLE also has a geometrical interpretation.
Furthermore, we show that the score matching estimator is asymptotically efficient if and only if the Fisher score functions span the same space as the Wasserstein score functions.
\subsection{Literature review and contributions}

There is an extensive literature on the estimation for non-normalized models, beginning with Hyv\"{a}rinen's pioneering work \citep{Hyvarinen2005}.
Regarding the denoising autoencoder as a non-normalized model,
\citet{Vincent2011} proposed denoising score matching.
To improve computational efficiency on score matching, \citet{Song2020} introduced sliced score matching.
The framework of scoring rules is extended to cover score matching \citep{Parry2012,Kanamori2015,Takasu2018}.
\citet{Sriperumbudur2017} proposed density estimation using infinite dimensional exponential families with score matching.
\citet{Gutmann2010} introduced yet another estimation method called noise contrastive estimation.
\citet{Gutmann2011} provided a unified framework based on the Bregman divergence that encompasses the score matching and noise contrastive estimation.
\citet{Matsuda2021} derived model selection criteria for non-normalized models.
In settings with missing data, \citet{Uehara2020} combined imputation techniques with estimators for non-normalized models, and \citet{Givens2025} utilized marginal score functions.
\citet{Uehara2020b} developed asymptotically efficient estimators that do not depend on the normalizing constant by employing density-ratio matching with a nonparametric density estimator.

Despite the broad literature, the statistical efficiency of the original score matching has not been well understood.
Using the isoperimetric constant,
\cite{Koehler2022} analyzed
the condition under which the score matching estimator is comparable to the maximum likelihood estimator.
Our work provides a further characterization within the SMoM framework, and constructs an estimator that improves upon the score matching when the latter is not asymptotically efficient.

Non-normalized models also naturally arise on more general domains, such as truncated domains in $\R^p$ (e.g., the non-negative orthant $\R_+^{p}$) or Riemannian manifolds (e.g., the unit sphere $\mathbb{S}^{p-1}$).
Score matching has been generalized to general domains by changing the metric $\ang{\,\cdot\,,\,\cdot\,}$ to a weighted metric $w\ang{\,\cdot\,,\,\cdot\,}$ \citep[e.g.,][]{Hyvarinen2007,Liu2022}, or replacing differential operators with their manifold counterparts \citep[e.g.,][]{Dawid2005,Mardia2016,Williams2022}.
SMoM also has been generalized in the same way \citep[c.f.,][]{Fischer2024,Fischer2025}.
In line with these generalizations, our results are extended to these settings;
see \ref{sec:main_result_M}.
For simplicity, we primarily focus on non-normalized models on $\R^p$ in the main text.

Our analysis is motivated by the fact that 
the score matching estimator corresponds to the SMoM estimator based on the test functions $f_{\theta,j}\coloneqq\nabla_{x}\partial_{\theta_j}\log q_{\theta}$; see Lemma~\ref{lem:smom_sm}.
This lemma has been previously mentioned by \citet{Ebner2025}, \citet{Eguchi2025}, and \citet{Kume2026}; it stands as a clue to investigate the relationship between score matching and the Stein operator, which has delivered various estimators.
\citet{Eguchi2025} extended Lemma \ref{lem:smom_sm} using the $\gamma$-divergence and developed the $\gamma$-score matching estimator, a robust estimator based on the $\gamma$-Stein operator.
\citet{Barp2019} proposed an estimation method based on Stein discrepancy and derived its relationship with score matching.
Closely related to the present paper is \citet{Kume2026}, where Lemma \ref{lem:smom_sm} together with the generalized method of moments (GMM) framework is employed to construct an estimator that improves upon score matching
in exponential families. We emphasize that our geometric construction delivers a different perspective on score matching in general statistical models, and reveals an unexpected connection between score matching and the Wasserstein geometry.

\subsection{Organization}

The rest of this paper is organized as follows.
In Section~\ref{sec:preliminaries}, we prepare notations and review the score matching and the SMoM.
In Section~\ref{sec:main_result}, we give our main results concerning the canonical decomposition of SMoM estimators and the construction of a SMoM estimator improving the asymptotic variance of the score matching estimator.
In Section~\ref{sec:Wasserstein}, we investigate the connections between the Wasserstein geometry and SMoM.
In Section~\ref{sec:simu_appl}, we present numerical experiments.
In Section~\ref{sec:concl}, we conclude the paper.
In \ref{sec:regularity}, we provide the regularity conditions for the model and test functions.
In~\ref{sec:Wasserstein_proof}, we provide the proofs for the results in Section~\ref{sec:Wasserstein}.
In~\ref{sec:main_result_M}, we give our results on more general domains.
In \ref{sec:simu_appl_M}, we present additional numerical experiments.

\section{Preliminaries}
\label{sec:preliminaries}

In this section, we prepare notations and review the score matching and the SMoM.

The standard inner product and the induced norm on $\R^p$ is denoted by $\ang{\,\cdot\,,\,\cdot\,}$ and $\norm{\,\cdot\,}$, respectively.
The gradient operator on $\R^{p}$ is denoted by $\nabla_{x}$,
the divergence of a vector field $f\colon\R^p\to\R^{p}$ is defined as $\div{f}\coloneqq\sum_{k=1}^{p}\partial f_k/\partial x_{k}$,
and the Laplacian of a function $g\colon\R^p\to\R$ is defined as $\Delta_{x}g\coloneqq \div(\nabla_{x} g)=\sum_{k=1}^{p}\partial^{2}{g}/\partial{x_{k}^{2}}$.
Let $\partial_{\theta_j}$ be the partial differential operator $\partial/\partial \theta_j$.

Let the parameter space $\Theta\subset\R^d$ be an open set, and let $q_{\theta}$ be a  probability density on $\R^d$ such that $q_\theta>0$ almost everywhere.
We assume that the model $\{q_{\theta}\mid\theta\in\Theta\}$ is identifiable, that is, $q_{\theta}=q_{\theta'}$ if and only if $\theta=\theta'$.
For an estimator $\hat\theta$ that is asymptotically normal, i.e.~$\sqrt{n}(\hat\theta-\theta^\star)\warrow N(0,V)$, we define its asymptotic variance as $\AVar[\hat\theta]\coloneqq V$.
The Fisher information matrix $I(\theta^\star)\in\R^{d\times d}$ is defined by $I(\theta^\star)_{jk}\coloneqq\E_{\theta^\star}[(\partial_{\theta_j}\log q_{\theta^\star})(\partial_{\theta_k}\log q_{\theta^\star})]$.
We assume that $q_{\theta}$ and test functions $f_{\theta,j}\colon\R^p\to\R^p$ for $j=1,\dots,d$ are smooth with respect to both $\theta$ and $x$.

Let $X_1,\dots,X_n$ be i.i.d.~random variables from $q_{\theta^\star}$.
The score matching estimator $\hat\theta_{\mathrm{SM}}$ is given as a solution to the estimating equations:
\[\frac{1}{n}\sum_{i=1}^{n}\partial_{\theta_j}H(X_i,\theta)=0,\quad j=1,\dots,d,\]
where the Hyvärinen score $H$ is defined by \[H(x,\theta)\coloneqq\norm{\score_\theta(x)}^{2}/2 + \Delta_{x}\log q_\theta(x).\]
Under the boundary condition $\lim_{\norm{x}\to\infty}q_{\theta^\star}(x)\score_{\theta}(x)=0$, using integration by parts, we have the relation \[\E_{\theta^\star}[H(X,\theta)]=\E_{\theta^\star}[\norm{\score_{\theta^\star}-\score_{\theta}}^{2}]/2+c,\]
where $X\sim q_{\theta^\star}$ and $c$ is a constant with respect to $\theta$.
Thus, minimizing $\E_{\theta^\star}[H(X,\theta)]$ over $\Theta$ is equivalent to minimizing $\E_{\theta^\star}[\norm{\score_{\theta^\star}-\score_{\theta}}^{2}]$.

The SMoM estimator $\hat\theta_\mathrm{SMoM}$ based on the test functions $f_{\theta,1},\dots,f_{\theta,d}$ is given as a solution to the estimating equations:
\[\frac{1}{n}\sum_{i=1}^{n}\A_{\theta}f_{\theta,j}(X_{i})=0,\quad j=1,\dots,d.\]

The following lemma states that the score matching estimator is an SMoM estimator.

\begin{lem}[Section 2.1 of \cite{Eguchi2025}; Lemma 3.2 of \cite{Kume2026}]
    \label{lem:smom_sm}
    The SMoM estimator based on the test functions $f_{\theta,j}\coloneqq\nabla_{x}\partial_{\theta_j}\log q_{\theta}, \enspace j=1,\dots,d$ is the score matching estimator.
\end{lem}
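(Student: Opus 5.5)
The plan is to show that the two systems of estimating equations coincide term by term. That is, for each $j$ and each $x$,
\[\A_{\theta}f_{\theta,j}(x)=\partial_{\theta_j}H(x,\theta).\]
Once this pointwise identity is established, the SMoM equations $\frac1n\sum_i\A_\theta f_{\theta,j}(X_i)=0$ and the score matching equations $\frac1n\sum_i\partial_{\theta_j}H(X_i,\theta)=0$ are literally the same system. They therefore have the same solutions, and the lemma follows.

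First I expand the divergence-based Stein operator using the product rule. For a general smooth vector field $f$,
\[\A_\theta f=\frac{\div(q_\theta f)}{q_\theta}=\div f+\ang{\score_\theta,f}.\]
Here I use $\nabla_x q_\theta/q_\theta=\score_\theta$. Substituting $f=f_{\theta,j}=\nabla_x\partial_{\theta_j}\log q_\theta$ gives
\[\A_\theta f_{\theta,j}=\Delta_x\partial_{\theta_j}\log q_\theta+\ang{\score_\theta,\nabla_x\partial_{\theta_j}\log q_\theta}.\]

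Second, I differentiate $H(x,\theta)=\norm{\score_\theta}^2/2+\Delta_x\log q_\theta$ in $\theta_j$. Since $q_\theta$ is smooth jointly in $(\theta,x)$, I may interchange $\partial_{\theta_j}$ with $\nabla_x$ and with $\Delta_x$. This gives
\[\partial_{\theta_j}\tfrac12\norm{\score_\theta}^2=\ang{\score_\theta,\nabla_x\partial_{\theta_j}\log q_\theta}\quad\text{and}\quad \partial_{\theta_j}\Delta_x\log q_\theta=\Delta_x\partial_{\theta_j}\log q_\theta .\]
Adding these two terms reproduces exactly the expression from the first step.

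The only point needing care is the interchange of derivatives, which is justified by the standing smoothness assumption. Note also that $\log q_\theta=\log\tilde q_\theta-\log Z(\theta)$. The $Z$ term depends on $\theta$ but not on $x$, so $\nabla_x$ kills it and everything above is normalizing-constant-free. I do not expect a real obstacle here: the argument is a direct computation, and no boundary conditions are needed because the identity is pointwise rather than in expectation.
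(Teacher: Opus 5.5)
Your proposal is correct and matches the paper's proof. The paper argues in the same way that $f_{\theta,j}=\nabla_x\partial_{\theta_j}\log q_\theta$ satisfies $\A_\theta f_{\theta,j}=\partial_{\theta_j}H(x,\theta)$ pointwise, so the two systems of estimating equations coincide. You additionally spell out the expansion $\A_\theta f=\div f+\ang{\score_\theta,f}$ and the interchange of $\partial_{\theta_j}$ with $\nabla_x$ and $\Delta_x$, which the paper leaves implicit.
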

\begin{proof}
    Since $f_{\theta,j}=\nabla_{x}\partial_{\theta_j}\log q_\theta$ is a solution of the equation $\A_{\theta}f_{\theta,j}(x)=\partial_{\theta_j}H(x,\theta)$,
    the estimating equations of score matching coincide with that of SMoM based on the test functions.
\end{proof}

\begin{remark}[Test functions and gradient vector fields]
    By the Helmholtz decomposition \citep{Ay2025}, a test function $f_{\theta}\colon\R^p\to\R^p$ can be decomposed a gradient term and a divergence-free term; there exist $h_\theta\colon\R^p\to\R$ and $g_\theta\colon\R^p\to\R^p$ such that 
    \[
    \A_{\theta}g_\theta=0
\quad\text{and}\quad
f_{\theta}=\nabla h_{\theta}+g_{\theta}.
    \]
    However, we do not restrict test functions to gradients because modeling vector fields directly is computationally efficient. 
\end{remark}

\section{Main results}
\label{sec:main_result}

This section presents our main results.
To state the results rigorously, we impose regularity conditions for the model and test functions detailed in \ref{sec:regularity}.
Particularly, we assume the consistency and asymptotic linearity for SMoM estimators, and they are verified by the standard theory of $Z$-estimators \citep[e.g.,][]{VanderVaart1998}.
Under these conditions and Lemma~\ref{lem:smom_sm}, the score matching estimator $\hat\theta_\mathrm{SM}$ and the SMoM estimator $\hat\theta_\mathrm{SMoM}$ based on the test functions $f_{\theta,1},\dots,f_{\theta,d}$ have the following asymptotic linear representation:
\begin{align}
\hat\theta_\mathrm{SM}-\theta^\star
    &=-G^{-1}\frac{1}{n}\sum_{i=1}^{n}\begin{pmatrix}\A_{\theta^\star}(\nabla_{x}\partial_{\theta_1}\log q_{\theta^\star})(X_i)\\\vdots\\\A_{\theta^\star}(\nabla_{x}\partial_{\theta_d}\log q_{\theta^\star})(X_i)\end{pmatrix}+ \smallop,\label{eq:SM linear}\\
\hat\theta_\mathrm{SMoM}-\theta^\star
    &=-G_\mathrm{SMoM}^{-1}\frac{1}{n}\sum_{i=1}^{n}\begin{pmatrix}\A_{\theta^\star}f_{\theta^\star,1}(X_i)\\\vdots\\\A_{\theta^\star}f_{\theta^\star,d}(X_i)\end{pmatrix} + \smallop,\label{eq:SMoM linear}
\end{align}
respectively,
where $G,G_\mathrm{SMoM}\in\R^{d\times d}$ are the matrices whose $(j,k)$-th entries are defined by
\[G_{jk}\coloneqq\E_{\theta^\star}\Big[\partial_{\theta_k}\A_{\theta}(\nabla_{x}\partial_{\theta_j}\log q_\theta)\Big|_{\theta=\theta^\star}\Big],\quad (G_\mathrm{SMoM})_{jk}\coloneqq\E_{\theta^\star}\Big[\partial_{\theta_k}\A_{\theta}f_{\theta,j}\Big|_{\theta=\theta^\star}\Big],\]
respectively.

\subsection{A canonical decomposition of SMoM estimators}

We begin with presenting a canonical decompotision of SMoM estimators (Theorem \ref{thm:smom_decomp}).
Before presenting the main theorem, we introduce two  notions of orthogonality: $W$-orthogonality and $\mathcal{A}_{\theta^{\star}}$-orthogonality. The acronym $W$ refers to Wasserstein; see Section \ref{sec:Wasserstein} for details.
\begin{dfn}
    A test function $f$ is called $W$-orthogonal to a test function $g$ if we have
    $
    \E_{\theta^\star}[\ang{f,g}]=0.
    $
    A test function $f$ is called $\mathcal{A}_{\theta^{\star}}$-orthogonal to a test function $g$ if we have
    $
    \E_{\theta^\star}[(\mathcal{A}_{\theta^{\star}}f)(\mathcal{A}_{\theta^{\star}}g)]=0.
    $
\end{dfn}

The following theorem 
provides the asymptotic linear representation of the SMoM estimator after centering the score matching estimator.

\begin{thm}[Canonical decomposition of SMoM estimator]
    \label{thm:smom_decomp}
    Under the regularity conditions, there exist $u_j^{\star}\colon\R^p\to\R^p\,(j=1,\dots,d)$ such that the asymptotic linear representation of $\hat\theta_\mathrm{SMoM}$ can be decomposed as follows: 
    \[\hat\theta_\mathrm{SMoM}-\theta^\star=\Big(\hat\theta_\mathrm{SM}-\theta^\star\Big)-G^{-1}\frac{1}{n}\sum_{i=1}^{n}\begin{pmatrix}\A_{\theta^\star}u_1^\star(X_i)\\\vdots\\\A_{\theta^\star}u_d^\star(X_i)\end{pmatrix} + \smallop , \]
    where each $u_{j}^\star\colon\R^p\to\R^p$ is $W$-orthogonal to the test function corresponding to score matching:
    \[
    \E_{\theta^\star}[\ang{u_j^\star,\nabla_{x}\partial_{\theta_k}\log q_{\theta^\star}}]=0, \quad k=1,\dots,d.
    \]
\end{thm}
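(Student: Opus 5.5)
The plan is to use two facts: the linearity of $\A_{\theta^\star}$ in the test function, and an integration-by-parts identity that expresses both $G$ and $G_\mathrm{SMoM}$ as $W$-inner products. The first step is to rewrite \eqref{eq:SMoM linear} so that the factor in front of the sum is $G^{-1}$ rather than $G_\mathrm{SMoM}^{-1}$. Put $M\coloneqq G\,G_\mathrm{SMoM}^{-1}\in\R^{d\times d}$ and define transformed test functions
\[
\tilde f_j\coloneqq\sum_{l=1}^{d}M_{jl}\,f_{\theta^\star,l},\qquad j=1,\dots,d.
\]
Since $\A_{\theta^\star}$ is linear in its argument, $G_\mathrm{SMoM}^{-1}(\A_{\theta^\star}f_{\theta^\star,l})_l=G^{-1}(\A_{\theta^\star}\tilde f_j)_j$. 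Writing $s_j\coloneqq\nabla_{x}\partial_{\theta_j}\log q_{\theta^\star}$ and $u_j^\star\coloneqq\tilde f_j-s_j$, and subtracting \eqref{eq:SM linear}, gives exactly the displayed decomposition. The remaining work is to prove the $W$-orthogonality of $u_j^\star$ to each $s_k$.

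The key step is the identity
\[
(G_\mathrm{SMoM})_{jk}=\E_{\theta^\star}\big[\ang{f_{\theta^\star,j},\,s_k}\big],
\]
valid for any admissible family of test functions, and its special case $G_{jk}=\E_{\theta^\star}[\ang{s_j,s_k}]$ obtained from $f_{\theta,j}=\nabla_x\partial_{\theta_j}\log q_\theta$. To prove it, start from $\E_\theta[\A_\theta f_{\theta,j}]=\int \div(q_\theta f_{\theta,j})\d x=0$ for all $\theta$ near $\theta^\star$. Differentiating in $\theta_k$ under the integral (justified by the regularity conditions) yields
\[
0=\E_{\theta^\star}\big[\partial_{\theta_k}\A_\theta f_{\theta,j}|_{\theta^\star}\big]+\E_{\theta^\star}\big[(\A_{\theta^\star}f_{\theta^\star,j})\,\partial_{\theta_k}\log q_{\theta^\star}\big].
\]
For the second term, integrate by parts, using the boundary condition $q_{\theta^\star}f_{\theta^\star,j}\,\partial_{\theta_k}\log q_{\theta^\star}\to0$:
\[
\int \div(q_{\theta^\star}f_{\theta^\star,j})\,\partial_{\theta_k}\log q_{\theta^\star}\d x=-\E_{\theta^\star}\big[\ang{f_{\theta^\star,j},\nabla_x\partial_{\theta_k}\log q_{\theta^\star}}\big].
\]
Combining the two displays gives the identity.

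With the identity in hand, bilinearity gives
\[
\E_{\theta^\star}[\ang{\tilde f_j,s_k}]=\sum_l M_{jl}(G_\mathrm{SMoM})_{lk}=(G\,G_\mathrm{SMoM}^{-1}G_\mathrm{SMoM})_{jk}=G_{jk}=\E_{\theta^\star}[\ang{s_j,s_k}].
\]
Hence $\E_{\theta^\star}[\ang{u_j^\star,s_k}]=0$ for all $j,k$, which is the required $W$-orthogonality.

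The main obstacle is the rigorous justification of the identity: exchanging $\partial_{\theta_k}$ with the integral and discarding the boundary terms in the integration by parts. I would state these as part of the regularity conditions, namely dominated derivatives and decay of $q_\theta f_{\theta,j}\,\partial_{\theta_k}\log q_\theta$ at infinity. Invertibility of $G$ and $G_\mathrm{SMoM}$ is already implicit in \eqref{eq:SM linear}–\eqref{eq:SMoM linear}. A side benefit of the identity is that it shows $G$ is the Gram matrix of the $s_j$ in $L^2(q_{\theta^\star})$, hence symmetric; this is not needed for the proof.
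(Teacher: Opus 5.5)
Your proof is correct and is essentially the paper's argument. Your $M=G\,G_\mathrm{SMoM}^{-1}$ is the inverse of the paper's coefficient matrix $B$ in $f_{\theta^\star,j}=\sum_k B_{jk}(\nabla_x\partial_{\theta_k}\log q_{\theta^\star}+u_k^\star)$, where Lemma~\ref{lem:G_mat} gives $G_\mathrm{SMoM}=BG$. So your $u_j^\star$ are the paper's, only defined explicitly rather than through a $W$-orthogonal projection, which makes the invertibility of $B$ automatic.

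The one minor difference is how you obtain the inner-product formula for $G_\mathrm{SMoM}$: you differentiate $\E_\theta[\A_\theta f_{\theta,j}]=0$ and integrate by parts. The paper instead uses the pointwise identity $\partial_{\theta_k}\A_\theta f_{\theta,j}=\A_\theta(\partial_{\theta_k}f_{\theta,j})+\ang{f_{\theta,j},\nabla_x\partial_{\theta_k}\log q_\theta}$ together with condition A2. Both routes are valid under the stated regularity assumptions.
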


Through this canonical decomposition, the asymptotic linear representation of SMoM estimators is characterized only by its $W$-orthogonal term $\A_{\theta^\star}u_k^\star$.
The score matching estimator is the SMoM estimator without $W$-orthogonal terms.

\begin{remark}[$W$-orthogonality does not imply $\A_{\theta^\star}$-orthogonality]
Importantly, the Stein operator does not preserve orthogonality; that is, 
even if $u_k^\star$ is $W$-orthogonal to $\nabla_{x}\partial_{\theta_j}\log q_{\theta^\star}$,
the $\A_{\theta^{\star}}$-orthogonal condition $\E_{\theta^\star}[\A_{\theta^\star}(\nabla_{x}\partial_{\theta_j}\log q_{\theta^\star})\A_{\theta^\star}u_k^\star]=0$ does not necessarily hold.
This give a clue to constructing an SMoM estimator that improves upon the score matching estimator;
by finding appropriate elements that are $W$-orthogonal to the test functions corresponding to the score matching, 
we can reduce the asymptotic variance.
In Section~\ref{sec:improve}, we show how to construct such orthogonal elements.
\end{remark}

To prove Theorem \ref{thm:smom_decomp}, 
we need the following lemma telling us that $G_\mathrm{SMoM}$ is an inner product matrix, and in particular, $G$ is a Gram matrix.
\begin{lem}
    \label{lem:G_mat}
    Under the regularity conditions, for the test functions $f_{\theta,1},\dots,f_{\theta,d}$, we have \[\E_{\theta^\star}\Big[\partial_{\theta_k}\A_{\theta}f_{\theta,j}\Big|_{\theta=\theta^\star}\Big]=\E_{\theta^\star}[\ang{f_{\theta^\star,j},\nabla_{x}\partial_{\theta_k}\log q_{\theta^\star}}],\quad j,k=1,\dots,d.\]
    In particular, we have $G_{jk}=\E_{\theta^\star}[\ang{\nabla_{x}\partial_{\theta_j}\log q_{\theta^\star},\nabla_{x}\partial_{\theta_k}\log q_{\theta^\star}}]$.
\end{lem}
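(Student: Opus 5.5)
We need to prove Lemma~\ref{lem:G_mat}. The plan is to differentiate the Stein identity in $\theta$ and then integrate by parts. Since $\A_\theta f = \div(q_\theta f)/q_\theta = \div f + \ang{\score_\theta, f}$, we have
\[
\partial_{\theta_k}\A_\theta f_{\theta,j} = \A_\theta(\partial_{\theta_k} f_{\theta,j}) + \ang{f_{\theta,j}, \nabla_x\partial_{\theta_k}\log q_\theta},
\]
using $\partial_{\theta_k}\score_\theta = \nabla_x\partial_{\theta_k}\log q_\theta$. Here $\partial_{\theta_k}$ and $\nabla_x$ commute by smoothness, and the normalizing constant drops out because $\nabla_x\log Z(\theta)=0$. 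Evaluating at $\theta=\theta^\star$ and taking $\E_{\theta^\star}$ then reduces the claim to showing
\[
\E_{\theta^\star}\bigl[\A_{\theta^\star}(\partial_{\theta_k}f_{\theta,j}|_{\theta=\theta^\star})\bigr]=0.
\]

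This vanishing is the Stein identity applied to the test function $\partial_{\theta_k}f_{\theta,j}|_{\theta=\theta^\star}$. Concretely, $\int \div\bigl(q_{\theta^\star}\,\partial_{\theta_k}f_{\theta,j}\bigr)\d x=0$ by the divergence theorem, provided the boundary condition $\lim_{\norm{x}\to\infty} q_{\theta^\star}(x)\,\partial_{\theta_k}f_{\theta,j}(x)|_{\theta^\star}=0$ holds together with suitable integrability. I would take these from the regularity conditions in the appendix. Alternatively, one can differentiate the identity $\int \div(q_{\theta^\star} f_{\theta,j})\d x = 0$, which holds for all $\theta$, with respect to $\theta$ under the integral sign. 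This gives the same conclusion and only requires a dominated-convergence condition.

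The particular case follows by substituting $f_{\theta,j}=\nabla_x\partial_{\theta_j}\log q_\theta$, which yields the Gram form of $G$ directly.

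The main obstacle is purely technical: justifying the boundary and integrability conditions needed to interchange derivative and integral, and to discard the boundary term, for the $\theta$-derivative of the test functions. I would state this explicitly as part of the regularity assumptions rather than derive it. The algebra itself is routine.
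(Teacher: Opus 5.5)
Your proposal is correct and follows essentially the same route as the paper: the product-rule identity $\partial_{\theta_k}\A_\theta f_{\theta,j}=\A_\theta(\partial_{\theta_k}f_{\theta,j})+\ang{f_{\theta,j},\nabla_x\partial_{\theta_k}\log q_\theta}$, followed by taking $\E_{\theta^\star}$ and discarding the first term. The paper does not derive the vanishing of $\E_{\theta^\star}[\A_{\theta^\star}(\partial_{\theta_k}f_{\theta^\star,j})]$ but simply assumes it as regularity condition A2, with integrability of the inner-product term supplied by A3. This matches your plan to place these conditions in the assumptions.
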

\begin{proof}
    Observe the relation \[\partial_{\theta_k}\A_\theta f_{\theta,j}=\A_{\theta}(\partial_{\theta_k}f_{\theta,j})+\ang{f_{\theta,j},\nabla_{x}\partial_{\theta_k}\log q_\theta}.\]
    Together with $\E_{\theta^\star}[\A_{\theta^\star}(\partial_{\theta_k}f_{\theta^\star,j})]=0$ under the regularity conditions,
    taking the expectation of this yields $\E_{\theta^\star}[\partial_{\theta_k}\A_{\theta}f_{\theta,j}|_{\theta=\theta^\star}]=\E_{\theta^\star}[\ang{f_{\theta^\star,j},\nabla_{x}\partial_{\theta_k}\log q_{\theta^\star}}]$, which completes the proof.
\end{proof}

Remark that this representation of $G$ gives the Riemannian metric induced by the score matching \citep{Karakida2016}.

Now, we prove Theorem \ref{thm:smom_decomp} by combining
the asymptotic linear representations \eqref{eq:SM linear}-\eqref{eq:SMoM linear} and Lemma \ref{lem:G_mat}.

\begin{proof}[Proof of Theorem \ref{thm:smom_decomp}]

Fix $f_{\theta,1},\ldots,f_{\theta,d}$ arbitrarily.
We begin with the $W$-orthogonal decomposition:
\begin{equation}
    \label{eq:orthogonal_decomp_testfunction}
    f_{\theta^\star,j}=\sum_{k=1}^{d}B_{jk}(\nabla_{x}\partial_{\theta_k}\log q_{\theta^\star} + u_{k}^\star),
\end{equation}
where $B\in\R^{d\times d}$ is a coefficient matrix and $u_{k}^\star\colon\R^p\to\R^p$ are $W$-orthogonal elements such that $\E_{\theta^\star}[\ang{u_{k}^\star,\nabla_{x}\partial_{\theta_j}\log q_{\theta^\star}}]=0$ for $j=1,\dots,d$.
This decomposition, together with Lemma \ref{lem:G_mat}, gives the relation $G_\mathrm{SMoM}=BG$.
Importantly, the coefficient matrix $B$ must be invertible for $\hat\theta_\mathrm{SMoM}$ to admit the asymptotic linear representation.

We next substitute the relation $G_\mathrm{SMoM}=BG$ to Equation \eqref{eq:SMoM linear}.
Then, 
we have
    \begin{align*}
    \hat\theta_\mathrm{SMoM}-\theta^\star
        &=-(BG)^{-1}\frac{1}{n}\sum_{i=1}^{n}B\begin{pmatrix}\A_{\theta^\star}(\nabla_{x}\partial_{\theta_1}\log q_{\theta^\star}+u_1^\star)(X_i)\\\vdots\\\A_{\theta^\star}(\nabla_{x}\partial_{\theta_d}\log q_{\theta^\star}+u_d^\star)(X_i)\end{pmatrix}+ \smallop.
    \end{align*}
From the linearity of $\A_{\theta^\star}$,
the invertibility of $B$,
and Equation \eqref{eq:SM linear}, we further obtain
    \begin{align*}
        \hat\theta_\mathrm{SMoM}-\theta^\star &=-G^{-1}\frac{1}{n}\sum_{i=1}^{n}\begin{pmatrix}\A_{\theta^\star}(\nabla_{x}\partial_{\theta_1}\log q_{\theta^\star})(X_i)\\\vdots\\\A_{\theta^\star}(\nabla_{x}\partial_{\theta_d}\log q_{\theta^\star})(X_i)\end{pmatrix}-G^{-1}\frac{1}{n}\sum_{i=1}^{n}\begin{pmatrix}\A_{\theta^\star}u_1^\star(X_i)\\\vdots\\\A_{\theta^\star}u_d^\star(X_i)\end{pmatrix}+ \smallop\\
        &=\Big(\hat\theta_\mathrm{SM}-\theta^\star\Big)-G^{-1}\frac{1}{n}\sum_{i=1}^{n}\begin{pmatrix}\A_{\theta^\star}u_1^\star(X_i)\\\vdots\\\A_{\theta^\star}u_d^\star(X_i)\end{pmatrix} + \smallop,
    \end{align*}
    which completes the proof.
\end{proof}

\begin{exm}[Exponential families]
Consider an exponential family whose density is defined by
\begin{equation}
    \label{eq:exp_fam}
    q_{\theta}(x)=\frac{1}{Z(\theta)}\exp\Big(t(x)^{\top}\theta+b(x)\Big),\enspace Z(\theta)=\int_{\R^{p}}\exp\Big(t(x)^{\top}\theta+b(x)\Big)\d{x},
\end{equation}
where $t\colon\R^p\to\R^d$ is a sufficient statistics and $b\colon\R^p\to\R$ is a base measure.

The SMoM estimator $\hat\theta_\mathrm{SMoM}$ based on (parameter-independent) test functions $f_1,\dots,f_d$ has the closed-form solution:
\[\hat\theta_\mathrm{SMoM}=-G_{\mathrm{SMoM},n}^{-1}\frac{1}{n}\sum_{i=1}^{n}\begin{pmatrix}\div f_1(X_i) + \ang{f_1(X_i),\nabla_{x}b(X_i)}\\\vdots\\\div f_d(X_i)+\ang{f_d(X_i),\nabla_{x}b(X_i)}\end{pmatrix},\]
where $G_{\mathrm{SMoM},n}\in\R^{d\times d}$ is the empirical inner product matrix whose $(j,k)$-th entry is defined by $(G_{\mathrm{SMoM},n})_{jk} \coloneqq n^{-1}\sum_{i=1}^{n}\ang{f_j(X_i),\nabla_x t_k(X_i)}$.
Observe that the relation 
\[E_{\theta^\star}[\div{f_j}+\ang{f_j,\nabla_x b}]=\E_{\theta^\star}\left[\A_{\theta^\star}f_j - \sum_{k=1}^d \ang{f_j,\nabla_x t_k}\theta^\star_k\right]=-G_\mathrm{SMoM}\theta^\star.\]
Then, if $G_\mathrm{SMoM}$ is invertible, $\hat\theta_\mathrm{SMoM}$ is consistent and has asymptotic normality under some conditions on integrability.
The linear independence of $\A_{\theta^\star}f_1,\dots,\A_{\theta^\star}f_d$ follows from Lemma~\ref{lem:connect_two_innerprod}.

The score matching estimator $\hat\theta_\mathrm{SM}$ is the SMoM estimator based on the test functions $f_{j}=\nabla_x t_j$, and also has the closed form \citep{Hyvarinen2007}:
\[\hat\theta_\mathrm{SM}=-G_{n}^{-1}\frac{1}{n}\sum_{i=1}^{n}\begin{pmatrix}\Delta_x t_1(X_i) + \ang{\nabla_x t_1(X_i),\nabla_x b(X_i)}\\\vdots\\\Delta_x t_d(X_i)+\ang{\nabla_x t_d(X_i),\nabla_x b(X_i)}\end{pmatrix},\]
where $G_{n}\in\R^{d\times d}$ is the empirical inner product matrix whose $(j,k)$-th entry is defined by $(G_{n})_{jk} \coloneqq n^{-1}\sum_{i=1}^{n}\ang{\nabla_x t_j(X_i),\nabla_x t_k(X_i)}$.
Consider the test functions $f_j=\nabla_x t_j + u_j^\star$, where $u_j^\star$ is $W$-orthogonal to $\nabla_x t_k$ for $k=1,\dots,d$.
The SMoM estimator based on this test functions can be decomposed as follows:
\begin{align*}
    \hat\theta_\mathrm{SMoM}-\theta^\star
    &=-\big(G_n + (G_{\mathrm{SMoM},n}-G_n)\big)^{-1}\frac{1}{n}\sum_{i=1}^{n}\begin{pmatrix}\A_{\theta^\star}(\nabla_x t_1)(X_i)+\A_{\theta^\star}u_1^\star(X_i)\\\vdots\\\A_{\theta^\star}(\nabla_x t_d)(X_i)+\A_{\theta^\star}u_d^\star(X_i)\end{pmatrix}\\
    &=\Big(\hat\theta_\mathrm{SM}-\theta^\star\Big)-G^{-1}\frac{1}{n}\sum_{i=1}^{n}\begin{pmatrix}\A_{\theta^\star}u_1^\star(X_i)\\\vdots\\\A_{\theta^\star}u_d^\star(X_i)\end{pmatrix}+\smallop,
\end{align*}
which agrees with Theorem \ref{thm:smom_decomp}.
\end{exm}

\subsection{Improving asymptotic variance of score matching estimator}
\label{sec:improve}
We next construct an SMoM estimator improving upon the score matching estimator in the asymptotic variance.
Fix $\theta_0\in\Theta$ and $\tilde{v}_\alpha\colon\R^p\to\R^p,\enspace\alpha=1,\dots,K$ arbitrarily. 
Using the test function $\tilde{v}_\alpha$, we define $v_{\theta_0,\alpha}\colon\R^p\to\R^p$ by the following orthogonalization procedure:
\begin{equation}
    \label{eq:orthogonalize}
    v_{\theta_0,\alpha}\coloneqq \tilde{v}_\alpha - \sum_{j=1}^{d}\left(F_{\theta_0}G_{\theta_0}^{-1}\right)_{\alpha j}\nabla_{x}\partial_{\theta_j}\log q_{\theta_0},
\end{equation}
where $F_{\theta_0}\in\R^{K\times d}$ and $G_{\theta_0}\in\R^{d\times d}$ are inner product matrices whose $(\alpha,j)$-th entry and $(j,k)$-th entry are defined by 
\begin{align*}
    (F_{\theta_0})_{\alpha j}&\coloneqq \E_{\theta_0}\left[\ang{\tilde{v}_\alpha,\nabla_{x}\partial_{\theta_j}\log q_{\theta_0}}\right],\\
    (G_{\theta_0})_{jk}&\coloneqq \E_{\theta_0}\left[\ang{\nabla_{x}\partial_{\theta_j}\log q_{\theta_0},\nabla_{x}\partial_{\theta_k}\log q_{\theta_0}}\right],
\end{align*}
respectively.
Observe the relation
\[\E_{\theta_0}[\ang{v_{\theta_0,\alpha},\nabla_{x}\partial_{\theta_j}\log q_{\theta_0}}]=0,\quad j=1,\dots,d.\]
Denote by $\hat\theta[\theta_0]$ the SMoM estimator based on the following test functions:
\begin{equation}
    \label{eq:testfunction} 
    f_{\theta,j}\coloneqq\nabla_{x}\partial_{\theta_j}\log q_{\theta} - \sum_{\alpha=1}^{K}\left(S_{\theta_0}T_{\theta_0}^{-1}\right)_{j\alpha}v_{\theta_0,\alpha},\quad j=1,\dots,d,
\end{equation}
where $S_{\theta_0}\in\R^{d\times K}$ and $T_{\theta_0}\in\R^{K\times K}$ are inner product matrices whose $(j,\alpha)$-th entry and $(\alpha,\beta)$-th entry are defined by
\begin{align*}
    (S_{\theta_0})_{j\alpha}&\coloneqq\E_{\theta_0}\left[\A_{\theta_0}(\nabla_{x}\partial_{\theta_j}\log q_{\theta_0})\A_{\theta_0}v_{\theta_0,\alpha}\right],\\
    (T_{\theta_0})_{\alpha\beta}&\coloneqq\E_{\theta_0}\left[\left(\A_{\theta_0}v_{\theta_0,\alpha}\right)\left(\A_{\theta_0}v_{\theta_0,\beta}\right)\right],
\end{align*}
respectively.
For the matrices $F_{\theta_0},G_{\theta_0},S_{\theta_0},T_{\theta_0}$, we make the following assumption.
\begin{asm}
    \label{asm:improve}
    $F_{\theta_0},G_{\theta_0},S_{\theta_0},T_{\theta_0}$ are continuous at $\theta_0=\theta^\star$.
\end{asm}

The following theorem tells us that $\hat\theta[\theta^\star]$ improves the asymptotic variance of $\hat\theta_\mathrm{SM}$.
Furthermore, such improvement remains even when  $\hat\theta_0$ is plugged in to $\theta^\star$.
The proof is given in \ref{sec:main_result_M}.
\begin{thm}
    \label{thm:improve}
    Under the regularity conditions, we have
    \[\AVar\left[\hat\theta_{\mathrm{SM}}\right]-\AVar\left[\hat\theta[\theta^\star]\right]
    =G^{-1}S_{\theta^\star}T_{\theta^\star}^{-1}S_{\theta^\star}^{\top}G^{-1}\succeq0.\]
    The equality $\AVar[\hat\theta_{\mathrm{SM}}]=\AVar[\hat\theta[\theta^\star]]$ holds if and only if $S_{\theta^\star}=O$ with a zero matrix $O$.
    Furthermore, under Assumption~\ref{asm:improve},
    for an estimator $\hat\theta_0$ satisfying $\hat\theta_0-\theta^\star=\largeop$, we have \[\hat\theta[\hat\theta_0]-\theta^\star=\hat\theta[\theta^\star]-\theta^\star+\smallop.\]
\end{thm}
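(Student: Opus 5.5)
The plan is to apply Theorem~\ref{thm:smom_decomp} directly to the test functions \eqref{eq:testfunction} with $\theta_0=\theta^\star$, and then compute the variances. At $\theta=\theta^\star$, the test function $f_{\theta^\star,j}$ already has the form $\nabla_x\partial_{\theta_j}\log q_{\theta^\star}+u_j^\star$ with
\[
u_j^\star=-\sum_\alpha (S T^{-1})_{j\alpha}v_{\theta^\star,\alpha},
\]
where we abbreviate $S=S_{\theta^\star}$ and $T=T_{\theta^\star}$. Each $v_{\theta^\star,\alpha}$ is $W$-orthogonal to the score matching test functions by construction, so $B=I$ in \eqref{eq:orthogonal_decomp_testfunction}. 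Lemma~\ref{lem:G_mat} then gives $G_\mathrm{SMoM}=G$, and the decomposition reads
\[
\hat\theta[\theta^\star]-\theta^\star=-G^{-1}\frac1n\sum_i\big(a(X_i)-ST^{-1}b(X_i)\big)+\smallop,
\]
where $a=(\A_{\theta^\star}\nabla_x\partial_{\theta_j}\log q_{\theta^\star})_j$ and $b=(\A_{\theta^\star}v_{\theta^\star,\alpha})_\alpha$.

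Both $a$ and $b$ are mean zero by the Stein identity. We have $\E[aa^\top]=:\Sigma$, $\E[ab^\top]=S$ and $\E[bb^\top]=T$. Since $a-ST^{-1}b$ is the $L^2$ residual of $a$ after projecting onto the span of $b$, its covariance is $\Sigma-ST^{-1}S^\top$. Hence
\[
\AVar[\hat\theta_\mathrm{SM}]-\AVar[\hat\theta[\theta^\star]]=G^{-1}ST^{-1}S^\top G^{-1}.
\]
This matrix is positive semidefinite because $T\succ0$; invertibility of $T$ is assumed in the construction, and it holds when the $b_\alpha$ are linearly independent. For the equality case, $G^{-1}ST^{-1}S^\top G^{-1}=O$ implies $T^{-1/2}S^\top G^{-1}=O$, hence $S=O$ since $G$ and $T$ are invertible. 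The converse is immediate.

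For the plug-in statement, I would note that the test functions depend on $\theta_0$ only through the finite-dimensional coefficients. Write
\[
f^{[\theta_0]}_{\theta,j}=\nabla_x\partial_{\theta_j}\log q_\theta-\sum_\alpha C_{j\alpha}(\theta_0)\Big(\tilde v_\alpha-\sum_k D_{\alpha k}(\theta_0)\nabla_x\partial_{\theta_k}\log q_{\theta_0}\Big),
\]
where $C=S_{\theta_0}T_{\theta_0}^{-1}$ and $D=F_{\theta_0}G_{\theta_0}^{-1}$ are continuous at $\theta^\star$ by Assumption~\ref{asm:improve}. The estimating equation becomes a finite linear combination, with coefficients depending on $\theta_0$, of the empirical means of $\A_\theta\nabla_x\partial_{\theta_j}\log q_\theta$, $\A_\theta\tilde v_\alpha$ and $\A_\theta\nabla_x\partial_{\theta_k}\log q_{\theta_0}$. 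I would treat $(\theta,\theta_0)$ jointly as a $Z$-estimation problem and linearize at $(\theta^\star,\theta^\star)$.

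The key point, and what I expect to be the main obstacle to make rigorous, is that the derivative of the population estimating function in $\theta_0$ vanishes at $\theta^\star$. At $\theta=\theta^\star$, every term has mean zero under $q_{\theta^\star}$ for any coefficient values. The one exception is the $\nabla_x\partial_{\theta_k}\log q_{\theta_0}$ piece, and there $\A_{\theta^\star}$ applied to a $\theta_0$-dependent but $\theta$-independent field still has mean zero by the Stein identity. So the population map $\theta_0\mapsto\E_{\theta^\star}[\A_{\theta^\star}f^{[\theta_0]}_{\theta^\star,j}]$ is identically zero near $\theta^\star$.

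Consequently, the empirical estimating function at $(\theta^\star,\hat\theta_0)$ differs from that at $(\theta^\star,\theta^\star)$ by coefficient changes of size $o_p(1)$, multiplying centered empirical means of size $O_p(n^{-1/2})$. The field term contributes an additional $O_p(n^{-1/2})\cdot O_p(n^{-1/2})$ via a stochastic-equicontinuity or mean-value argument under the regularity conditions. Either way the difference is $\smallop$. The Jacobian in $\theta$ is continuous in $\theta_0$ and converges to $G_\mathrm{SMoM}=G$. Substituting into the standard $Z$-estimator expansion then yields $\hat\theta[\hat\theta_0]-\theta^\star=\hat\theta[\theta^\star]-\theta^\star+\smallop$.
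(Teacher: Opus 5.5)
Your proposal is correct and follows essentially the same route as the paper. For the variance identity, you apply the canonical decomposition with $B=I$ and compute the covariance of the residual $a-ST^{-1}b$. For the plug-in statement, your joint $Z$-estimation framing uses the same two ingredients as the paper: continuity of the coefficient matrices, which yields $o_{\mathrm{p}}(1)\cdot O_{\mathrm{p}}(n^{-1/2})$ terms, and a Taylor expansion of $\A_{\theta^\star}(\nabla_x\partial_{\theta_k}\log q_{\theta_0})$ whose first-order term is negligible because $\E_{\theta^\star}[\A_{\theta^\star}(\nabla_x\partial_{\theta_j}\partial_{\theta_k}\log q_{\theta^\star})]=0$. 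Your version is in fact slightly more careful than the paper's: you explicitly note that the Jacobian at $\hat\theta_0$ only converges to $G$, whereas the paper writes $G^{-1}$ directly.
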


Figure~\ref{fig:schematic_diagram_improvement} illustrates the improvement procedure.
Consider the SMoM estimator $\hat\theta_\mathrm{SMoM}$ based on the following test functions:
\begin{equation}
    f_{\theta,j}\coloneqq\nabla_{x}\partial_{\theta_j}\log q_{\theta} + \sum_{\alpha=1}^{K}C_{j\alpha}v_{\theta^\star,\alpha},\quad j=1,\dots,d,
\end{equation}
where $C\in\R^{d\times K}$ is a coefficient matrix.
By Theorem~\ref{thm:smom_decomp}, the asymptotic variance $\AVar[\hat\theta_\mathrm{SMoM}]$ can be decomposed as follows:
\[\AVar\left[\hat\theta_\mathrm{SMoM}\right]=\AVar\left[\hat\theta_\mathrm{SM}\right]+G^{-1}C(T_{\theta^\star}+S_{\theta^\star}+S_{\theta^\star}^\top)C^\top G^{-1},\]
and it is minimized at $C=-S_{\theta^\star}T_{\theta^\star}^{-1}$.
In Figure~\ref{fig:schematic_diagram_improvement}, the score matching estimator, which is the SMoM estimator without $W$-orthogonal terms, is located at $c_1=c_2=0$.
When $K=1$, i.e.~$c_2=0$, 
since $\AVar[\hat\theta_\mathrm{SMoM}]$ is a quadratic function of $c_1$, $\hat\theta_\mathrm{SMoM}$ can be moved in the descent direction of $\AVar[\hat\theta_\mathrm{SMoM}]$ starting from $\hat\theta_\mathrm{SM}$, resulting in $\AVar[\hat\theta_\mathrm{SMoM}]$ being minimized at $c_1=c_1^\star$.
When $K=2$, $\AVar[\hat\theta_\mathrm{SM}]$ is further improved by additional direction $v_2^\star$, and $\AVar[\hat\theta_\mathrm{SMoM}]$ is minimized at $(c_1,c_2)=c^\star$.

\begin{figure}[H]
    \centering
    \includegraphics[width=0.75\linewidth]{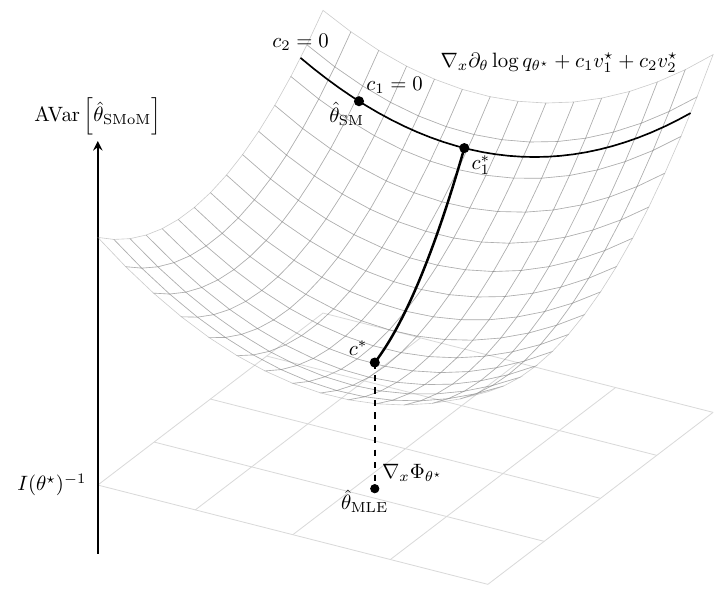}
    \caption{Schematic diagram of the improvement of the asymptotic variance $\AVar[\hat\theta_\mathrm{SM}]$ of the score matching estimator via SMoM. For $K=1$, the asymptotic variance $\AVar[\hat\theta[\theta^\star]]$ is minimized at $c_1^\star$, and it is lower than $\AVar[\hat\theta_\mathrm{SM}]$. For $K=2$, $\AVar[\hat\theta_\mathrm{SM}]$ is further improved at $c^\star$. $\AVar[\hat\theta[\theta^\star]]$ approaches the efficiency bound as increasing $K$.}
    \label{fig:schematic_diagram_improvement}
\end{figure}

Let $U_{\theta_0}\in\R^{d\times d}$ be the inner matrix whose $(j,k)$-th entry defined by
\[(U_{\theta_0})_{jk}\coloneqq\E_{\theta_0}\left[\A_{\theta_0}(\nabla_{x}\partial_{\theta_j}\log q_{\theta_0})\A_{\theta_0}(\nabla_{x}\partial_{\theta_k}\log q_{\theta_0})\right].\]
The asymptotic variance of the score matching estimator is given by $\AVar[\hat\theta_\mathrm{SM}]=G^{-1}U_{\theta^\star}G^{-1}$.
As the estimate of asymptotic relative efficiency $\AVar[\hat\theta[\hat\theta_0]]_{jj}/\AVar[\hat\theta_\mathrm{SM}]_{jj}$, we employ
\begin{equation}
    \label{eq:estimate_improvement}
    1-\frac{\left(G_{\hat\theta_0}^{-1}S_{\hat\theta_0}T_{\hat\theta_0}^{-1}S_{\hat\theta_0}^{\top}G_{\hat\theta_0}^{-1}\right)_{jj}}{\left(G_{\hat\theta_\mathrm{SM}}^{-1}U_{\hat\theta_\mathrm{SM}}G_{\hat\theta_\mathrm{SM}}^{-1}\right)_{jj}},\quad j=1,\dots,d,
\end{equation}
and this quantity can be calculated using the Monte Carlo approximation.

\begin{remark}[The choice of $\hat\theta_0,\tilde{v}_\alpha$, and $K$]
   A reasonable choice of $\hat\theta_0$ and $\tilde{v}_\alpha$ are the score matching estimator $\hat\theta_\mathrm{SM}$ and neural networks with random weight, which yields flexible and easily differentiable functions.
   Also, theoretically, increasing $K$ allows us to search for optimal $W$-orthogonal terms in a wider space. 
   In practice, we may need the Monte Carlo approximation of the expectation $\E_{\hat\theta_0}[\,\cdot\,]$, and it can lead to instability for a large $K$.
   In Section~\ref{sec:simu_appl}, we investigate this instability through numerical simulations. 
\end{remark}

\section{Connections between SMoM and the Wasserstein geometry}
\label{sec:Wasserstein}

In this section, we relate SMoM to the Wasserstein geometry.
This unexpected connection presents a further characterization of the SMoM and the score matching.

We first introduce the Wasserstein score function, 
a central concept in the Wasserstein geometry.
\begin{dfn}[The Wasserstein score function (\citealp{Li2023})]
The Wasserstein score function $\Phi_{\theta,j}\colon\R^p\to\R$ is the solution of the following partial differential equation:
\[\A_{\theta}\left(\nabla_{x}\Phi_{\theta,j}\right)=-\partial_{\theta_j}\log q_{\theta},\quad j=1,\dots,d\]
with $\E_{\theta}[\Phi_{\theta,j}]=0$.
\end{dfn}
The Wasserstein score function induces a second-order approximation of the 2-Wasserstein distance, and plays a key role in the Wasserstein geometry \citep[e.g.,][]{Li2023,Nishimori2025}.
Also, $W$-orthogonality corresponds to the orthogonality with respect to the Wasserstein covariance.

Within the SMoM framework, 
the maximum likelihood estimator is
the SMoM estimator based on the test functions $f_{\theta,j}=-\nabla_x\Phi_{\theta,j}$.
Remark that
since the Wasserstein score function often does not has a closed form and depends on the normalizing constant,
this SMoM estimator (MLE) is computationally infeasible.

Yet, regarding MLE as the SMoM from the (minus) Wasserstein score function offers the following characterization of the score matching estimator: the score matching estimator cannot be improved if and only if the Fisher score functions span the same space as the Wasserstein score functions.
In this situation, the score matching estimator coincides with the MLE.
The proof is given in \ref{sec:Wasserstein_proof}.

\begin{thm}
    \label{thm:lowerbound}
    Assume the regularity conditions.
    Then, the following are equivalent:
    \begin{enumerate}
        \item the score matching estimator is asymptotically efficient;
        \item if $u^\star$ is $W$-orthogonal to $\nabla_x\partial_{\theta_j}\log q_{\theta^\star}$ for $j=1,\dots,d$, then it is also $\A_{\theta^\star}$-orthogonal to $\nabla_x\partial_{\theta_j}\log q_{\theta^\star}$ for $j=1,\dots,d$; 
        \item there exists a matrix $\Lambda\in\R^{d\times d}$ for which we have
        \[\partial_{\theta_j}\log q_{\theta^\star}=\sum_{k=1}^{d}\Lambda_{jk}\Phi_{\theta^\star,k}, \quad j=1,\dots,d.\]
    \end{enumerate}
\end{thm}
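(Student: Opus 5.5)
The plan is to establish (1)$\Leftrightarrow$(3) through the variance identity for SMoM estimators, and (2)$\Leftrightarrow$(3) through a duality between the two inner products. The key tool is the identity obtained by integration by parts: for a test function $f$ and a scalar $g$,
\[
\E_{\theta^\star}[(\A_{\theta^\star}f)\,g]=-\E_{\theta^\star}[\ang{f,\nabla_x g}].
\]
Applying it with $g=\A_{\theta^\star}(\nabla_x h)$ converts $\A_{\theta^\star}$-inner products into $W$-inner products. Applying it with $g=\partial_{\theta_j}\log q_{\theta^\star}$ gives
\[
\E_{\theta^\star}[(\A_{\theta^\star}f)\,\partial_{\theta_j}\log q_{\theta^\star}]=-\E_{\theta^\star}[\ang{f,\nabla_x\partial_{\theta_j}\log q_{\theta^\star}}].
\]
By the definition of $\Phi_{\theta^\star,j}$, this is the same as
\[
\E_{\theta^\star}[\ang{f,\nabla_x\partial_{\theta_j}\log q}]=\E_{\theta^\star}[(\A f)(\A\nabla_x\Phi_j)].
\]
So $W$-orthogonality to the score-matching test functions equals $\A$-orthogonality to the minus Wasserstein gradients $-\nabla_x\Phi_{\theta^\star,j}$. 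I expect this is the content of the Lemma~\ref{lem:connect_two_innerprod} cited earlier.

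\textbf{(2)$\Leftrightarrow$(3).} Work in the closure $\mathcal{H}$ of $\{\A_{\theta^\star}f\}$ in $L^2(q_{\theta^\star})$, with the $\A$-inner product. Write $a_j=\A\nabla_x\partial_{\theta_j}\log q$ (score matching) and $s_j=\A(-\nabla_x\Phi_j)=\partial_{\theta_j}\log q$ (Fisher scores). By the identity above, $u$ is $W$-orthogonal to every $\nabla_x\partial_{\theta_k}\log q$ exactly when $\A u\perp \mathrm{span}\{s_k\}$.

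Condition (2) therefore says $\mathrm{span}\{s_k\}^\perp\subset\mathrm{span}\{a_j\}^\perp$. Assuming the test class is rich enough that $\A$ maps it densely into $\mathcal{H}$ (a regularity condition), taking orthocomplements in this finite-dimensional setting gives $\mathrm{span}\{a_j\}\subset\mathrm{span}\{s_k\}$. Both spans have dimension $d$: $G$ and $I$ are invertible, and linear independence follows from Lemma~\ref{lem:connect_two_innerprod}. Hence the two spans are equal.

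To reach the form stated in (3), I pull the equality back through $\A\nabla_x$. The relation $\A\nabla_x\partial_{\theta_j}\log q=\sum_k M_{jk}\partial_{\theta_k}\log q=-\sum_k M_{jk}\A\nabla_x\Phi_k$ implies, by uniqueness (up to constants) of solutions of $\A\nabla_x h=g$ given mean zero, that $\partial_{\theta_j}\log q=-\sum_k M_{jk}\Phi_k+c_j$, and $c_j=0$ because both sides have mean zero. Each step reverses, which gives the converse direction.

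\textbf{(1)$\Leftrightarrow$(3).} By the canonical decomposition (Theorem~\ref{thm:smom_decomp}), the MLE is an SMoM estimator with test functions $-\nabla_x\Phi_j$. It therefore equals score matching plus a $W$-orthogonal correction $u_j^\star$, namely the component of $-\nabla_x\Phi_j$ that is $W$-orthogonal to the score-matching test functions.

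Asymptotic efficiency of score matching means $\AVar[\hat\theta_{\mathrm{SM}}]=I^{-1}$. Equivalently, the influence function $G^{-1}(a_j)_j$ equals $I^{-1}(s_j)_j$, since among regular asymptotically linear estimators only the efficient influence function attains the bound. That equality holds iff $\mathrm{span}\{a_j\}=\mathrm{span}\{s_j\}$, with the coefficient matrix then forced by $G=\E[a s^\top]$. By the previous paragraph this is equivalent to (3).

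Alternatively, from (2), Theorem~\ref{thm:improve} shows that if $S\neq O$ for some $W$-orthogonal $v$, then score matching is strictly improvable and hence not efficient. This gives (1)$\Rightarrow$(2) directly.

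\textbf{Main obstacle.} The delicate step is the functional-analytic one: that $W$-orthogonal test functions $u$ generate all of $\mathrm{span}\{s_k\}^\perp$ within $\mathcal{H}$, and that $\A\nabla_x$ is injective modulo constants. These density and uniqueness claims depend on the regularity conditions. I would first try to use the Helmholtz remark: restrict to gradient test functions $\nabla_x h$ and invoke solvability of the weighted Poisson equation $\A\nabla_x h=g$ for mean-zero $g\in L^2$, which holds under a Poincaré-type assumption.
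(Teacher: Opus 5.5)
Your proof is correct, but it is organized differently from the paper's. The paper proves $(1)\Rightarrow(3)\Rightarrow(1)$ and $(3)\Rightarrow(2)\Rightarrow(1)$, working throughout with the explicit residual $u_j^\star=\sum_k(GF^{-1})_{jk}\nabla_x\Phi_{\theta^\star,k}-\nabla_x\partial_{\theta_j}\log q_{\theta^\star}$. Lemma~\ref{lem:gap_mle_sm} gives the variance gap $G^{-1}TG^{-1}$, efficiency then forces $\A_{\theta^\star}u_j^\star=0$, and Lemma~\ref{lem:lowerbound} upgrades this to $u_j^\star=0$. For $(2)\Rightarrow(1)$ it notes $S=O$ and plays the canonical decomposition of the MLE against Lemma~\ref{lem:gap_mle_sm}.

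You instead pivot on the single statement $\mathrm{span}\{a_j\}=\mathrm{span}\{s_j\}$ in $L^2(q_{\theta^\star})$. You show that (1) and (2) are each equivalent to it, and you obtain (3) as its pull-back through $h\mapsto\A_{\theta^\star}(\nabla_x h)$. This exposes the structure: (1) becomes the classical criterion that an unbiased estimating function is efficient iff it is a nonsingular linear transform of the Fisher score, and (2) is the orthocomplement form of the same inclusion. It also gives a direct $(2)\Rightarrow(3)$ instead of the paper's detour through (1). The paper's route, in exchange, yields the quantitative gap formula and stays in the language of $W$-orthogonal test functions.

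The two routes are closer than they look. $G^{-1}(\A_{\theta^\star}u_1^\star,\dots,\A_{\theta^\star}u_d^\star)^\top$ is exactly $\psi_\mathrm{SM}-\psi_\mathrm{MLE}$. So Lemma~\ref{lem:gap_mle_sm} is the same Pythagorean identity $\AVar=I(\theta^\star)^{-1}+\mathrm{Var}(\psi-I(\theta^\star)^{-1}s)$ that underlies your appeal to the efficient influence function. For that appeal you need only that $\E_{\theta^\star}[\psi_\mathrm{SM}s^\top]$ is the identity matrix, not regularity of the estimator. This follows from your own duality identity once the signs are fixed: $\psi_\mathrm{SM}=-G^{-1}a$ and $\E_{\theta^\star}[a s^\top]=-G$, not $+G$.

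The obstacle you flag is also milder than you expect. No density of the range of $\A_{\theta^\star}$ and no Poincar\'e inequality are needed. Apply (2) only to $u=\nabla_x\partial_{\theta_j}\log q_{\theta^\star}+\sum_k c_k\nabla_x\Phi_{\theta^\star,k}$, with $c$ chosen so that $\A_{\theta^\star}u\perp\mathrm{span}\{s_k\}$; this $u$ is $-u_j^\star$. Then $\A_{\theta^\star}u$ is orthogonal to $\mathrm{span}\{a_k,s_k\}$, which contains it, so $\A_{\theta^\star}u=0$.

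Injectivity of $h\mapsto\A_{\theta^\star}(\nabla_x h)$ modulo constants is just the energy identity $\A_{\theta^\star}(h\nabla_x h)=h\,\A_{\theta^\star}(\nabla_x h)+\norm{\nabla_x h}^2$, which is Lemma~\ref{lem:lowerbound}. Its hypothesis $\E_{\theta^\star}[\A_{\theta^\star}(h\nabla_x h)]=0$, for $h$ a linear combination of Fisher and Wasserstein scores, is exactly what conditions A5--A8 supply.
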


Let us provide two examples in which the score matching estimator is asymptotically efficient.
\begin{exm}[Normal distribution]
   For the normal distribution $N(\mu,\Sigma)$, the Wasserstein score functions are given as follows \citep{Amari2024}:
   \begin{align*}
        \Phi_{\mu_j}(x)&=x_j-\mu_j,&&j=1,\dots,p,\\
        \Phi_{\Sigma_{jk}}(x)&=- \frac{1}{2}\trace(S_{jk}\Sigma)+\frac{1}{2}(x-\mu)^{\top}S_{jk}(x-\mu),&&1\leq j\leq k\leq p.
   \end{align*}
   Here, $S_{jk}\in\R^{p\times p}$ is the symmetric matrix defined by the unique solution of the Sylvester equation
   \[S_{jk}\Sigma+\Sigma S_{jk}=\begin{cases}E_{jj}&j=k\\E_{jk}+E_{kj}&j\neq k\end{cases},\]
   where $E_{jk}\in\R^{p\times p}$ is the matrix whose $(j,k)$-th entry is 1 and all other entries are 0.
   The Fisher score functions are givens as follows:
   \begin{align*}
       \partial_{\mu_j}\log q_{\theta}(x)&=\sum_{k=1}^{p}(\Sigma^{-1})_{jk}(x_k-\mu_k),\\
       \partial_{\Sigma_{jk}}\log q_{\theta}(x)&=-\trace S_{jk}+\frac{1}{2}(x-\mu)^{\top}(S_{jk}\Sigma^{-1}+\Sigma^{-1}S_{jk})(x-\mu).
   \end{align*}
   By the existence and uniqueness of the solution of the Sylvester equation, there exist unique constants $c_{lm}\in\R$ such that $S_{jk}\Sigma^{-1}+\Sigma^{-1}S_{jk}=\sum_{l\leq m}c_{lm}S_{lm}$. In addition, we have
   \[\sum_{l\leq m}c_{lm}\trace(S_{lm}\Sigma)=\trace\big((S_{jk}\Sigma^{-1}+\Sigma^{-1}S_{jk})\Sigma\big)=2\trace S_{jk}.\]
   Thus, we obtain
   \begin{align*}
       \partial_{\mu_{j}}\log q_{\theta}&=\sum_{k=1}^{p}(\Sigma^{-1})_{jk}\Phi_{\mu_{k}},\\
       \partial_{\Sigma_{jk}}\log q_{\theta}&=\sum\limits_{l\leq m}c_{lm}\Phi_{\Sigma_{lm}},
   \end{align*}
   which, together with Theorem \ref{thm:lowerbound}, concludes that the score matching is asymptotically efficient; in this example, the score matching coincides with MLE and thus is efficient \citep{Hyvarinen2005}.
\end{exm}

\begin{exm}[Generalized gamma-type distribution]
    Consider the model on $\R$ whose density is defined by
    \[q_\theta(x)=\frac{\theta^{\beta+1/2}}{\Gamma(\beta+1/2)}x^{2\beta}e^{-x^2\theta},\]
    where $\beta\in\mathbb{N}$ is a fixed shape parameter and $\theta>0$ is a scale parameter.
    Since the Fisher score function is given by $\partial_{\theta}\log q_\theta(x)=-x^{2}+(2\beta+1)/(2\theta)$, we obtain
    \[\A_{\theta}(\nabla_x\partial_{\theta}\log q_{\theta})=-4\theta\partial_\theta\log q_\theta.\]
    Thus, the Wasserstein score function is given by $\Phi_{\theta}(x)=-x^{2}/(4\theta)+(2\beta+1)/(8\theta^2)$,
    which, together with Theorem \ref{thm:lowerbound}, concludes that the score matching is asymptotically efficient. This gives a counterexample to the latter part of Theorem 13.5 in \cite{Amari2016}.
\end{exm}

\begin{remark}[Iterative subspace construction]
Our Theorem~\ref{thm:improve} shows that the score matching is improved upon by incorporating appropriate $W$-orthogonal elements.
If these $W$-orthogonal elements corresponds to the Wasserstein score functions, the resulting SMoM estimator attains the Fisher efficiency.
Even if we cannot identify such elements, the asymptotic variance approaches the efficiency bound as we incorporate more orthogonal elements.
As illustrated in Figure~\ref{fig:schematic_diagram_improvement}, this procedure is interpreted as the iterative orthonormal subspace expansion of test functions to cover the Wasserstein score functions.
\end{remark}

To prove Theorem~\ref{thm:lowerbound}, we need the two lemmas.
The following lemma connects two inner products related to $W$- and $\mathcal{A}_{\theta^{\star}}$-orthogonality.
The proof is given in \ref{sec:Wasserstein_proof}.
\begin{lem}
    \label{lem:connect_two_innerprod}
    Under the regularity conditions, for $f_\theta\colon\R^p\to\R^p$, we have
    \[\E_{\theta^\star}\left[\ang{f_{\theta^\star},\nabla_{x}\partial_{\theta_j}\log q_{\theta^\star}}\right]=\E_{\theta^\star}\left[(\A_{\theta^\star}f_{\theta^\star})(\A_{\theta^\star}(\nabla_x\Phi_{\theta^\star,j}))\right],\quad j=1,\dots,d.\]
    In particular, 
    for $j=1,\dots,d$,
    if the test function $u^\star$ is $W$-orthogonal to $\nabla_{x}\partial_{\theta_j}\log q_{\theta^\star}$,
    then it is $\A_{\theta^\star}$-orthogonal to $\nabla_x\Phi_{\theta^\star,j}$.
\end{lem}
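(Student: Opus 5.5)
The proof is an integration by parts that moves the Stein operator from $f_{\theta^\star}$ onto the Wasserstein score function, followed by the defining PDE of $\Phi_{\theta^\star,j}$. Write $q=q_{\theta^\star}$, $f=f_{\theta^\star}$ and $\Phi_j=\Phi_{\theta^\star,j}$.

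\textbf{Step 1.} By the definition of $\Phi_j$, the right-hand side equals
\[-\E_{\theta^\star}[(\A_{\theta^\star}f)\,\partial_{\theta_j}\log q].\]
So it suffices to show
\[\E_{\theta^\star}[\ang{f,\nabla_x\partial_{\theta_j}\log q}]=-\E_{\theta^\star}[(\A_{\theta^\star}f)\,\partial_{\theta_j}\log q].\]

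\textbf{Step 2.} Write $\E_{\theta^\star}[(\A_{\theta^\star}f)\,g]=\int \div(qf)\,g\d x$ with $g=\partial_{\theta_j}\log q$. Integrating by parts gives
\[-\int q\ang{f,\nabla_x g}\d x=-\E_{\theta^\star}[\ang{f,\nabla_x g}].\]
This is the required identity. Equivalently, apply the product rule $\A_{\theta^\star}(gf)=g\,\A_{\theta^\star}f+\ang{f,\nabla_x g}$ together with $\E_{\theta^\star}[\A_{\theta^\star}(gf)]=0$. The latter is the same device used in the proof of Lemma~\ref{lem:G_mat}.

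\textbf{Boundary terms (main obstacle).} The only non-routine point is justifying that the boundary term vanishes, i.e.\ $\lim_{\norm{x}\to\infty}q\,(\partial_{\theta_j}\log q)\,f=0$, together with integrability of $(\A_{\theta^\star}f)\,\partial_{\theta_j}\log q$ and $\ang{f,\nabla_x\partial_{\theta_j}\log q}$. I would take these from the regularity conditions in the appendix, since they are of exactly the type already used to obtain $\E_{\theta^\star}[\A_{\theta^\star}(\cdot)]=0$. If needed, the test function $(\partial_{\theta_j}\log q)f$ would be added to the admissible class.

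\textbf{The ``in particular'' part.} If $u^\star$ is $W$-orthogonal to $\nabla_x\partial_{\theta_j}\log q$, the left-hand side vanishes with $f=u^\star$. Hence $\E_{\theta^\star}[(\A_{\theta^\star}u^\star)(\A_{\theta^\star}\nabla_x\Phi_j)]=0$, which is $\A_{\theta^\star}$-orthogonality to $\nabla_x\Phi_j$.
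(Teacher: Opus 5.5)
Your argument is correct, but it takes a different route from the paper.

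The paper never integrates by parts in $x$ directly. It differentiates the Stein identity $\E_\theta[\A_\theta f_\theta]=0$ in $\theta_j$ at $\theta^\star$, using the Leibniz-rule condition A4. This yields $\E_{\theta^\star}[\partial_{\theta_j}\A_\theta f_\theta|_{\theta=\theta^\star}]+\E_{\theta^\star}[(\A_{\theta^\star}f_{\theta^\star})\,\partial_{\theta_j}\log q_{\theta^\star}]=0$. The paper then substitutes the PDE for $\Phi_{\theta^\star,j}$ and evaluates the first term by Lemma~\ref{lem:G_mat}, which itself uses A2.

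You instead use the product rule $\A_{\theta^\star}(gf)=g\,\A_{\theta^\star}f+\ang{f,\nabla_x g}$ together with the Stein identity for the single field $(\partial_{\theta_j}\log q_{\theta^\star})f_{\theta^\star}$. In other words, you use directly that $-\A_{\theta^\star}$ is the $L^2(q_{\theta^\star})$-adjoint of $\nabla_x$. This is more elementary and more transparent: it needs no $\theta$-dependence of $f$, no interchange of $\partial_{\theta_j}$ with integration, and no appeal to Lemma~\ref{lem:G_mat}.

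Its price is the hypothesis you flagged, $\E_{\theta^\star}[\A_{\theta^\star}((\partial_{\theta_j}\log q_{\theta^\star})f_{\theta^\star})]=0$. This is not among A1--A11 for general $f$; only special cases appear, in A5--A7. You do not need to enlarge the admissible class, though. Expanding $\partial_{\theta_j}\div(q_\theta f_\theta)=\div\bigl(q_\theta(\partial_{\theta_j}\log q_\theta)f_\theta\bigr)+\div(q_\theta\,\partial_{\theta_j}f_\theta)$ shows that A1 (holding for all $\theta$ near $\theta^\star$), A4 and A2 together give exactly your identity. The integrability needed to split the integral comes from A1, A3 and finiteness of $I(\theta^\star)$.

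So the paper's differentiation-in-$\theta$ argument is, in effect, a way of deriving your boundary condition from its stated assumptions. Your version isolates the underlying adjoint relation more cleanly. The ``in particular'' part follows exactly as you wrote.
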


The following lemma  shows that the gap between $\AVar[\hat\theta_\mathrm{MLE}]$ and $\AVar[\hat\theta_\mathrm{SM}]$ is characterized by the $W$-orthogonal elements $u_k^\star$.
The proof is given in \ref{sec:Wasserstein_proof}.
\begin{lem}
    \label{lem:gap_mle_sm}
    Under the regularity conditions, we have
    \[\AVar\left[\hat\theta_\mathrm{SM}\right]-\AVar\left[\hat\theta_\mathrm{MLE}\right]=G^{-1}\E_{\theta^\star}\left[
        \begin{pmatrix}\A_{\theta^\star}u_1^\star\\\vdots\\\A_{\theta^\star}u_d^\star\end{pmatrix}
        \begin{pmatrix}\A_{\theta^\star}u_1^\star\\\vdots\\\A_{\theta^\star}u_d^\star\end{pmatrix}^\top
    \right]G^{-1},\]
    where $u_j^\star$ is the $W$-orthogonal element of $\nabla_x\Phi_{\theta^\star,j}$ given by the $W$-orthogonal decomposition \eqref{eq:orthogonal_decomp_testfunction}.
\end{lem}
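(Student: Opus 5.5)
We treat the MLE as the SMoM estimator based on the test functions $f_{\theta,j}=-\nabla_x\Phi_{\theta,j}$ and apply the canonical decomposition of Theorem~\ref{thm:smom_decomp} to it. That is, we write
\[
-\nabla_x\Phi_{\theta^\star,j}=\sum_{k=1}^{d}B_{jk}\big(\nabla_x\partial_{\theta_k}\log q_{\theta^\star}+u_k^\star\big),
\]
with $u_k^\star$ $W$-orthogonal to every $\nabla_x\partial_{\theta_l}\log q_{\theta^\star}$, as in \eqref{eq:orthogonal_decomp_testfunction}. Theorem~\ref{thm:smom_decomp} then gives
\[
\hat\theta_\mathrm{MLE}-\theta^\star=(\hat\theta_\mathrm{SM}-\theta^\star)-G^{-1}\frac1n\sum_i\big(\A_{\theta^\star}u_1^\star(X_i),\dots,\A_{\theta^\star}u_d^\star(X_i)\big)^\top+\smallop .
\]
We write $a(X)$ for the vector with entries $\A_{\theta^\star}(\nabla_x\partial_{\theta_j}\log q_{\theta^\star})(X)$ and $w(X)$ for the vector with entries $\A_{\theta^\star}u_j^\star(X)$. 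The influence functions are then $-G^{-1}a$ for $\hat\theta_\mathrm{SM}$ and $-G^{-1}(a+w)$ for $\hat\theta_\mathrm{MLE}$. Hence
\[
\AVar[\hat\theta_\mathrm{SM}]-\AVar[\hat\theta_\mathrm{MLE}]=G^{-1}\Big(\E[aa^\top]-\E[(a+w)(a+w)^\top]\Big)G^{-1}.
\]
The claim therefore reduces to showing that the cross terms satisfy $\E[aw^\top]+\E[wa^\top]=-2\E[ww^\top]$. A sufficient condition is $\E[(a+w)w^\top]=O$, meaning the MLE influence function is uncorrelated with the $W$-orthogonal part.

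To prove this orthogonality we use Lemma~\ref{lem:connect_two_innerprod}. Since each $u_k^\star$ is $W$-orthogonal to all $\nabla_x\partial_{\theta_l}\log q_{\theta^\star}$, that lemma gives $\E[(\A_{\theta^\star}u_k^\star)(\A_{\theta^\star}\nabla_x\Phi_{\theta^\star,l})]=0$ for all $k,l$. By the decomposition above, $\A_{\theta^\star}(-\nabla_x\Phi_{\theta^\star,j})=\sum_k B_{jk}(a_k+w_k)$. By linearity this yields $B\,\E[(a+w)w^\top]=O$, and since $B$ is invertible (as noted in the proof of Theorem~\ref{thm:smom_decomp}), $\E[(a+w)w^\top]=O$. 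Expanding, $\E[aw^\top]=-\E[ww^\top]$, and taking the transpose gives $\E[wa^\top]=-\E[ww^\top]$ because $\E[ww^\top]$ is symmetric. Substituting into the variance difference gives
\[
\E[aa^\top]-\E[aa^\top]-\E[aw^\top]-\E[wa^\top]-\E[ww^\top]=\E[ww^\top],
\]
which is exactly the claimed expression.

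The main obstacle is justifying that the MLE coincides with the SMoM estimator built from $-\nabla_x\Phi_{\theta,j}$, and that this estimator meets the regularity conditions (asymptotic linearity, invertibility of $B$, and applicability of Lemma~\ref{lem:connect_two_innerprod} to $\Phi$). For the first point, the defining PDE gives $\A_\theta(-\nabla_x\Phi_{\theta,j})=\partial_{\theta_j}\log q_\theta$, so the estimating equations are exactly the likelihood equations. Invertibility of $B$ can be checked via Lemma~\ref{lem:G_mat} together with Lemma~\ref{lem:connect_two_innerprod}. These give $G_\mathrm{SMoM}=BG$, and its entries equal $-\E[\ang{\nabla_x\Phi_{\theta^\star,j},\nabla_x\partial_{\theta_k}\log q_{\theta^\star}}]=-\E[\partial_{\theta_j}\log q\,\A(\nabla_x\Phi_k)]=I(\theta^\star)_{jk}$. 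Thus $B=I(\theta^\star)G^{-1}$, which is invertible whenever the Fisher information is nonsingular.
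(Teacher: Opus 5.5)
Your argument is correct, but it reaches the identity by a different route than the paper.

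In the paper's notation, let $S_{jk}=\E_{\theta^\star}[\A_{\theta^\star}u_j^\star\,\A_{\theta^\star}(\nabla_x\partial_{\theta_k}\log q_{\theta^\star})]$ and $T_{jk}=\E_{\theta^\star}[(\A_{\theta^\star}u_j^\star)(\A_{\theta^\star}u_k^\star)]$. Both proofs start from the identity that Theorem~\ref{thm:smom_decomp} gives,
$\AVar[\hat\theta_\mathrm{MLE}]=\AVar[\hat\theta_\mathrm{SM}]+G^{-1}(T+S+S^\top)G^{-1}$.
Everything then hinges on identifying $S$.

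\textbf{The paper's route.} It identifies $S$ by direct computation:
\begin{itemize}
\item it writes $u_j^\star=\sum_k(GF^{-1})_{jk}\nabla_x\Phi_{\theta^\star,k}-\nabla_x\partial_{\theta_j}\log q_{\theta^\star}$ explicitly;
\item it proves $F=I(\theta^\star)$ by differentiating the equation that defines $\Phi$;
\item it applies Lemma~\ref{lem:connect_two_innerprod} with $f=\nabla_x\partial_{\theta_k}\log q_{\theta^\star}$ to get $S=GI(\theta^\star)^{-1}G-U_{\theta^\star}$;
\item it closes the argument by invoking $\AVar[\hat\theta_\mathrm{MLE}]=I(\theta^\star)^{-1}$.
\end{itemize}

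\textbf{Your route.} Your identity $\E[(a+w)w^\top]=O$ is exactly $S^\top+T=O$, i.e.\ $S=-T$. You obtain it from the ``in particular'' part of Lemma~\ref{lem:connect_two_innerprod}, applied to $f=u_k^\star$ regarded as a $\theta$-independent test function, together with invertibility of $B$. This is the familiar Pythagorean argument: the efficient influence function is uncorrelated with the difference of influence functions.

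\textbf{What each buys.} Your version is shorter and makes the geometry transparent. It never needs the value of $\AVar[\hat\theta_\mathrm{MLE}]$. It needs nothing about $F$ beyond invertibility of $B$, and that follows from A10 anyway, since $B=G_\mathrm{SMoM}G^{-1}$. The paper's computation, in exchange, produces the explicit formula for $u_j^\star$ in terms of the Wasserstein scores and the identity $F=I(\theta^\star)$. Both are reused in the proof of Theorem~\ref{thm:lowerbound}: the step $(1)\Rightarrow(3)$ substitutes $u_j^\star=0$ into that formula.

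\textbf{A harmless sign slip.} For the MLE, $G_\mathrm{SMoM}=\E_{\theta^\star}[\partial_{\theta_k}\partial_{\theta_j}\log q_{\theta^\star}]=-I(\theta^\star)$, so $B=-I(\theta^\star)G^{-1}$ rather than $I(\theta^\star)G^{-1}$. In your chain, the middle expression should be $+\E[\partial_{\theta_j}\log q\,\A(\nabla_x\Phi_k)]$, which equals $-I(\theta^\star)_{jk}$. This does not affect invertibility. Also, the $u_k^\star$ are unchanged when $\nabla_x\Phi$ is replaced by $-\nabla_x\Phi$, so yours coincide with those in the statement.
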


Lemma~\ref{lem:gap_mle_sm} tells us that if the $\A_{\theta^\star}$-orthogonal terms $\A_{\theta^\star}u_j^\star$ is dominant in $\A_{\theta^\star}(\nabla_x\Phi_{\theta^\star,j})$, the asymptotic variance $\AVar[\hat\theta_\mathrm{SM}]_{jj}$ is much worse than $\AVar[\hat\theta_\mathrm{MLE}]_{jj}$.
Conversely, if $\A_{\theta^\star}u_j^\star$ is negligible compared to $\A_{\theta^\star}(\nabla_{x}\partial_{\theta_j}\log q_{\theta^\star})$, we have $\AVar[\hat\theta_\mathrm{SM}]_{jj}\approx\AVar[\hat\theta_\mathrm{MLE}]_{jj}$.

\begin{exm}[Generalized normal distribution]
Consider the special case of the generalized normal distribution whose density is defined by
\begin{equation}
    \label{eq:generalized_normal}
    q_{\theta}(x)=\frac{\beta\theta^{\frac{1}{2\beta}}}{\Gamma(1/(2\beta))}\exp\left(-\theta x^{2\beta}\right),
\end{equation}
where $x\in\R$, $\beta\in\mathbb{N}$ is a fixed shape parameter, and $\theta>0$ is the scale parameter.
If $\beta=1$, the distribution is $N(0,1/(2\theta))$.
Both of the score matching estimator and maximum likelihood estimator have the closed-form solutions:
\[
\hat\theta_\mathrm{MLE}=\frac{n}{2\beta\sum_{i=1}^{n}X_i^{2\beta}},\quad
\hat\theta_\mathrm{SM}=\frac{2\beta-1}{2\beta}\frac{\sum_{i=1}^{n} X_i^{2\beta-2}}{\sum_{i=1}^{n} X_i^{4\beta-2}}.
\]
They coincide only in case $\beta=1$.
The SMoM estimator based on the test function $f\colon\R\to\R$ also has the closed form:
\[
\hat\theta_\mathrm{SMoM}=\frac{\sum_{i=1}^{n}\frac{\mathrm{d}}{\mathrm{d} x}f(X_i)}{2\beta\sum_{i=1}^{n}X_{i}^{2\beta-1}f(X_i)}.
\]
The MLE corresponds to the case of $f(x)=x$, and the score matching estimator corresponds to the case of $f(x)=-2\beta x^{2\beta-1}$.
The Wasserstein score function $\Phi_\theta$ has the explicit form:
\[\Phi_\theta(x)=-\frac{1}{4\beta\theta}x^2+\frac{\Gamma(3/(2\beta))}{4\beta\theta^{1+1/\beta}\Gamma(1/(2\beta))}.\]
Observe that $\nabla_x\Phi_{\theta}$ is orthogonally decomposed as follows:
\[
\nabla_x\Phi_{\theta}=-\frac{\Gamma(1/(2\beta))}{4\beta^2(2\beta-1)\theta^{1/\beta}\Gamma(1-1/(2\beta)}\left(-2\beta x^{2\beta-1}+u_{\theta}\right),
\]
where $u_{\theta}$ is defined by
\[
u_{\theta}(x)\coloneqq-\frac{2\beta(2\beta-1)\theta^{-1+1/\beta}\Gamma(1-1/(2\beta))}{\Gamma(1/(2\beta))}x+2\beta x^{2\beta-1}
\]
and satisfies $\E_{\theta}[\ang{-2\beta X^{2\beta-1},u_{\theta}}]=0$.
Combine it with Lemma~\ref{lem:gap_mle_sm} to get  the asymptotic relative efficiency:
\begin{equation}
    \label{eq:generalized_normal_ratio}
        \frac{\AVar\left[\hat\theta_\mathrm{MLE}\right]}{\AVar\left[\hat\theta_\mathrm{SM}\right]}
        =\frac{2\beta-1}{2(3\beta-2)}\frac{\Gamma(1-1/(2\beta))^2}{\Gamma(1+1/(2\beta))\Gamma(2-3/(2\beta))}
        \to\frac13\quad(\beta\to\infty).
\end{equation}
Figure~\ref{fig:generalized_normal_ratio} shows the behavior of \eqref{eq:generalized_normal_ratio} with respect to $\beta$.
It tells us that the gap in the asymptotic variance between the score matching estimator and the MLE widens as $\beta$ increases.
\end{exm}

\begin{figure}[H]
    \centering
    \includegraphics[width=0.75\linewidth]{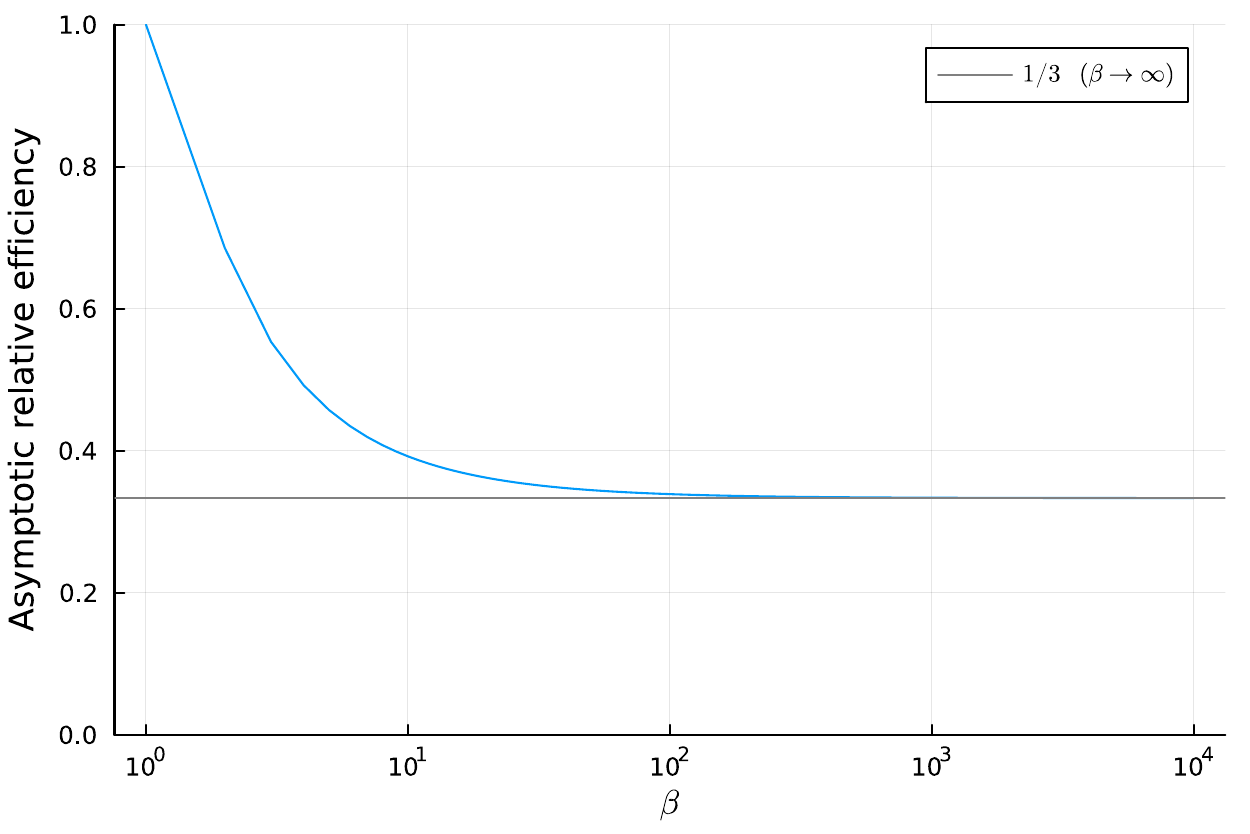}
    \caption{The asymptotic relative efficiency $\AVar[\hat\theta_\mathrm{MLE}]/\AVar[\hat\theta_\mathrm{SM}]$ with respect to $\beta$ (blue) along with its limits (gray).}
    \label{fig:generalized_normal_ratio}
\end{figure}

\section{Numerical experiments}
\label{sec:simu_appl}
In this section, we provide numerical experiments illustrating the SMoM estimator constructed in Theorem~\ref{thm:improve}; see also \ref{sec:simu_appl_M} for additional experiments.
We focus on estimating the parameters $\theta$.
The estimates $\hat\theta_\mathrm{SM},\hat\theta[\theta^\star]$, and $\hat\theta[\hat\theta_\mathrm{SM}]$ are calculated using an i.i.d. sample of size $n$ drawn from the distribution with $\theta^\star$.
This procedure is iterated in 1000 times, and the MSE of each estimates are calculated.
The estimates of asymptotic relative efficiency given by \eqref{eq:estimate_improvement} or \eqref{eq:estimate_improvement_M} are also calculated.
$\tilde{v}_1,\dots,\tilde{v}_K$ is constructed using neural networks.
Specifically, $\tilde{v}_\alpha$ is implemented as a neural network with five hidden layers, each consisting of three nodes, and $p$-dimensional input and output layers.
We employ $\tanh$ as the activation function, and all parameters are randomly initialized with values drawn from $N(0,1)$.
Since the estimates $\hat\theta[\theta^\star],\hat\theta[\hat\theta_\mathrm{SM}]$ depend on the choice of $\tilde{v}_1,\dots,\tilde{v}_K$, we evaluate the performance over 10 different pairs of initializations.
The expectations are approximated via Monte Carlo integration with 1000 samples.

\subsection{Generalized normal distribution}
We focus on estimating the scale parameter of $\theta$ of the generalized normal distribution whose density given by \eqref{eq:generalized_normal}.
Here, $\theta^\star$ is set to $\theta^\star=\Gamma(3/(2\beta))^{2\beta}/\Gamma(1/(2\beta))^{2\beta}$, which ensures that the distribution has unit variance.
$\beta$ is set to 2.

The sample size $n$ varies in $\{10,100,1000\}$. 
The number of $W$-orthogonal elements $K$ varies in $\{1,2,4,8\}$.
We also calculate $\hat\theta_\mathrm{MLE}$ as a benchmark.

Table~\ref{tab:gnormal_improvement_beta2} shows that both of $\hat\theta[\theta^\star],\hat\theta[\hat\theta_\mathrm{SM}]$ typically have lower variance than the score matching estimator, whereas this improvement does not hold for small $K$ or small $n$.
Both of them are comparable to the result of the MLE under some $\tilde{v}_\alpha$.
Figure~\ref{fig:gnormal_testfunction_beta2} shows that the test functions of $\hat\theta[\theta^\star],\hat\theta[\hat\theta_\mathrm{SM}]$ are almost same as that of the score matching estimator with $K=1$, and close to that of the MLE with $K=8$.

\begin{table}[H]
    \centering
    \caption{MSE ratio for $\hat\theta[\hat\theta_\mathrm{SM}]$, $\hat\theta[\theta^\star]$, and $\hat\theta_\mathrm{MLE}$ relative to $\hat\theta_\mathrm{SM}$. 
    For $\hat\theta[\hat\theta_\mathrm{SM}]$ and $\hat\theta[\theta^\star]$, the results are summarized as median (min, max) over different pairs of $\tilde{v}_\alpha$. A value smaller than 1 indicates better performance. For each sample size $n$, the median corresponding to the best-performing $K$ is underlined. Values exceeding 100 are omitted and represented by a hyphen (-).}
    \begin{tabular}{lllll}
  \hline
   && $\hat\theta[\hat\theta_\mathrm{SM}]$ & $\hat\theta[\theta^\star]$ & $\hat\theta_\mathrm{MLE}$\\\hline
  $n=10$ & $K=1$ & $1.015 \,(0.835,1.347)$ & $2.764 \,(0.683,-)$ & $0.536$ \\
   & $K=2$ & $0.858 \,(0.635,1.075)$ & $1.355 \,(0.539,5.118)$ & \\
   & $K=4$ & $\underline{0.646} \,(0.408,0.964)$ & $0.635 \,(0.347,2.688)$ & \\
   & $K=8$ & $0.740 \,(0.605,0.862)$ & $\underline{0.565} \,(0.447,1.022)$ & \\\hline
  $n=100$ & $K=1$ & $1.020 \,(0.944,1.054)$ & $1.022 \,(0.946,1.083)$ & $0.660$ \\
   & $K=2$ & $1.021 \,(0.735,1.065)$ & $1.015 \,(0.706,1.121)$ & \\
   & $K=4$ & $\underline{0.748} \,(0.661,0.851)$ & $0.742 \,(0.660,0.825)$ & \\
   & $K=8$ & $0.791 \,(0.695,1.156)$ & $\underline{0.736} \,(0.689,0.772)$ & \\\hline
  $n=1000$ & $K=1$ & $1.031 \,(0.984,1.042)$ & $1.006 \,(0.968,1.041)$ & $0.685$ \\
   & $K=2$ & $0.902 \,(0.665,1.050)$ & $0.899 \,(0.702,1.061)$ & \\
   & $K=4$ & $\underline{0.750} \,(0.700,1.019)$ & $\underline{0.750} \,(0.694,1.023)$ & \\
   & $K=8$ & $0.831 \,(0.698,2.758)$ & $0.829 \,(0.692,2.953)$ & \\\hline
\end{tabular}

    \label{tab:gnormal_improvement_beta2}
\end{table}

\begin{figure}[H]
    \centering
    \includegraphics[width=\linewidth]{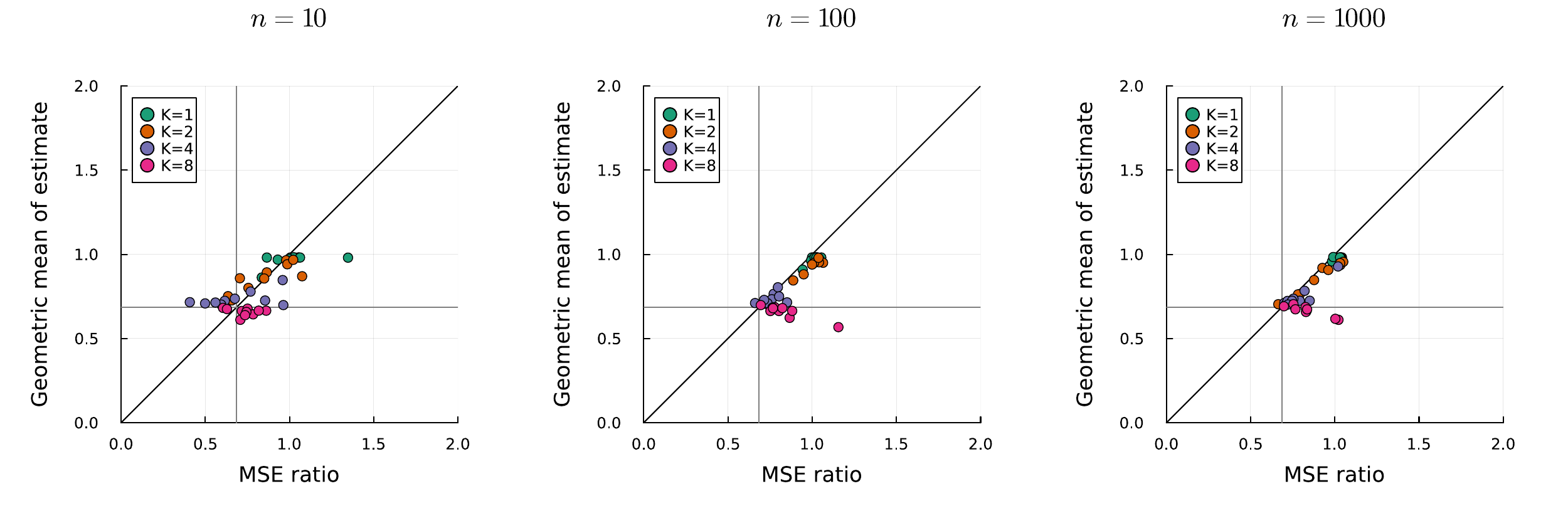}
    \caption{MSE ratio for $\hat\theta[\hat\theta_\mathrm{SM}]$ relative to $\hat\theta_\mathrm{SM}$ versus the geometric mean of estimates given by \eqref{eq:estimate_improvement}. Points of the same color correspond to different pairs of $\tilde{v}_\alpha$. The horizontal and vertical line (gray) represent the asymptotic relative efficiency for MLE calculated by \eqref{eq:generalized_normal_ratio}. The horizontal axis is truncated at 2.
    Values less than 1 on the horizontal axis indicate that corresponding SMoM estimator improves the variance of the score matching estimator. Points near the diagonal indicate that the estimate of the asymptotic relative efficiency is reliable.}
    \label{fig:gnormal_improvement_estimate_beta2}
\end{figure}

\begin{figure}[H]
    \centering
    \includegraphics[width=\linewidth]{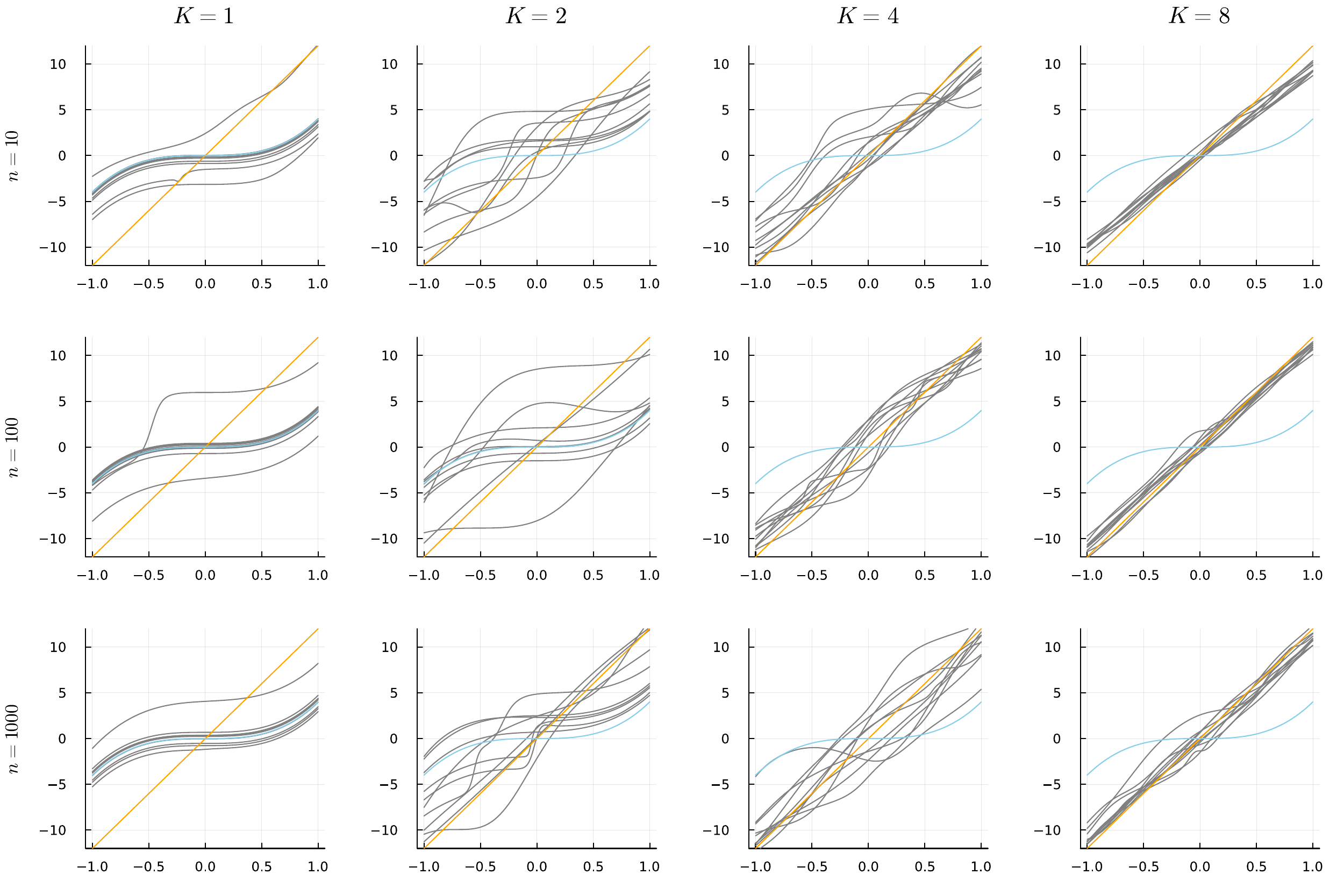}
    \caption{Test functions for $\hat\theta_\mathrm{SM}$ (blue), $\hat\theta_\mathrm{MLE}$ (orange), and $\hat\theta[\hat\theta_\mathrm{SM}]$ (gray). For $\hat\theta[\hat\theta_\mathrm{SM}]$, the mean value over iterations are plotted, where each line corresponds to a different pairs of $\tilde{v}_\alpha$. A test function close to the test function of the MLE implies that the corresponding SMoM estimator is also close to the MLE.}
    \label{fig:gnormal_testfunction_beta2}
\end{figure}

\subsection{Polynomially tilted pairwise interaction model}
Consider the $(p-1)$-dimensional unit sphere $\mathbb{S}^{p-1}\coloneqq\{x\in\mathbb{R}^{p}\mid\norm{x}=1\}$,
Let $\mathbb{S}^{p-1}_+$ be its non-negative orthant, i.e., $\mathbb{S}^{p-1}_+\coloneqq\mathbb{S}^{p-1}\cap[0,\infty)^p$.
The polynomially tilted pairwise interaction (PPI) model \citep{Scealy2023} has the following density:
\[q_{\theta}(x)=\frac{1}{Z(A,\mu,\beta)}\left(\prod_{j=1}^{p}x_{j}^{1+2\beta_j}\right)\exp\left(x^{2\top}Ax^2+\mu^\top x^2\right),\quad x\in\mathbb{S}^{p-1}_+,\]
where $\theta=(A,\mu)$, $A\in\R^{p\times p}$ is a symmetric matrix, $\mu\in\R^p$, and $\beta\in(-1,\infty)^p$ is a fixed shape parameter.
For identifiability, we fix $A_{pj}=A_{jp}=0$ for $j=1,\dots,d$ and $\mu_p=0$.
This density is derived by the square root transformation from the unit simplex to $\mathbb{S}^{p-1}_+$. See \citet{Scealy2023} for details. 
Also, about the weighted score matching estimator $\hat\theta_\mathrm{wSM}$ and corresponding SMoM estimators, see \ref{sec:main_result_M}.

The sample size $n$ is set to 100.
We set to $p=3$, $A^\star=\operatorname{diag}(1,1,0)$, $\mu^\star=(0,0,0)^\top$ and $\beta=(-0.5,-0.5,-0.5)^\top$.
We use the weight function $w(x)=\prod_{j=1}^p x_j$.
The number of $W$-orthogonal elements $K$ varies in $\{3,6,12,24\}$.
For calculation of $\tilde{v}_\alpha$, to ensure the output is a valid vector field on $\S^{p-1}_+$, the output vector is projected onto the tangent space $T_x\S^{p-1}_+$ via the orthogonal projection $\mathrm{P}_x=I_p-xx^\top$.

Table~\ref{tab:ppi_improvement} shows that both of $\hat\theta[\theta^\star],\hat\theta[\hat\theta_\mathrm{wSM}]$ improve the variance of the score matching estimator for $K\leq12$.
$\hat\theta[\hat\theta_\mathrm{wSM}]$ typically shows better performance than $\hat\theta[\theta^\star]$.
While the efficiency initially improves as $K$ increases, it begins to deteriorate at $K=24$, potentially due to numerical instability.
Figure~\ref{fig:ppi_improvement_estimate} shows that the estimate of asymptotic relative efficiency is typically well-performed for $K\leq12$.
These results confirm our theory works well in finite-sample settings, although we need to choose an appropriate $K$.

\begin{table}[H]
    \centering
    \caption{MSE ratio of PPI model for $\hat\theta[\hat\theta_\mathrm{wSM}]$, $\hat\theta[\theta^\star]$ relative to $\hat\theta_\mathrm{wSM}$. The results are represented in the format: median (min, max) across different pairs of $\tilde{v}_\alpha$. A value smaller than 1 indicates better performance. The best-performing $K$ is underlined. Values exceeding 100 are omitted and represented by a hyphen (-).}
    \begin{tabular}{lcll}
  \hline
  && $\hat\theta[\hat\theta_\mathrm{wSM}]$ & $\hat\theta[\theta^\star]$ \\
  \hline
  $K=3$ & $A_{11}$ & $0.919 \,(0.883,0.967)$ & $0.937 \,(0.912,0.961)$ \\
   & $A_{22}$ & $0.913 \,(0.840,0.952)$ & $0.929 \,(0.853,0.967)$ \\
   & $A_{12}$ & $0.909 \,(0.866,0.972)$ & $0.923 \,(0.893,0.973)$ \\
   & $\mu_1$ & $0.932 \,(0.909,0.982)$ & $0.948 \,(0.929,0.985)$ \\
   & $\mu_2$ & $0.919 \,(0.880,0.959)$ & $0.933 \,(0.892,0.968)$ \\\hline
  $K=6$ & $A_{11}$ & $0.810 \,(0.695,0.913)$ & $0.829 \,(0.704,0.907)$ \\
   & $A_{22}$ & $0.792 \,(0.680,0.884)$ & $0.812 \,(0.710,0.908)$ \\
   & $A_{12}$ & $0.830 \,(0.652,0.888)$ & $0.846 \,(0.657,0.899)$ \\
   & $\mu_1$ & $0.841 \,(0.640,0.907)$ & $0.850 \,(0.654,0.905)$ \\
   & $\mu_2$ & $0.788 \,(0.670,0.911)$ & $0.805 \,(0.679,0.917)$ \\\hline
  $\underline{K=12}$ & $A_{11}$ & $0.694 \,(0.648,0.765)$ & $0.732 \,(0.658,0.878)$ \\
   & $A_{22}$ & $0.664 \,(0.561,0.806)$ & $0.683 \,(0.595,0.851)$ \\
   & $A_{12}$ & $0.702 \,(0.622,0.825)$ & $0.709 \,(0.624,0.882)$ \\
   & $\mu_1$ & $0.706 \,(0.615,0.774)$ & $0.736 \,(0.613,0.830)$ \\
   & $\mu_2$ & $0.684 \,(0.559,0.786)$ & $0.694 \,(0.596,0.836)$ \\\hline
  $K=24$ & $A_{11}$ & $1.886 \,(0.638,-)$ & $33.035 \,(0.636,-)$ \\
   & $A_{22}$ & $2.054 \,(0.632,66.790)$ & $81.787 \,(0.653,-)$ \\
   & $A_{12}$ & $2.676 \,(0.613,-)$ & $40.986 \,(0.618,-)$ \\
   & $\mu_1$ & $1.985 \,(0.603,-)$ & $24.774 \,(0.599,-)$ \\
   & $\mu_2$ & $2.065 \,(0.622,-)$ & $55.931 \,(0.624,-)$ \\\hline
\end{tabular}

    \label{tab:ppi_improvement}
\end{table}

\begin{figure}[H]
    \centering
    \includegraphics[width=\linewidth]{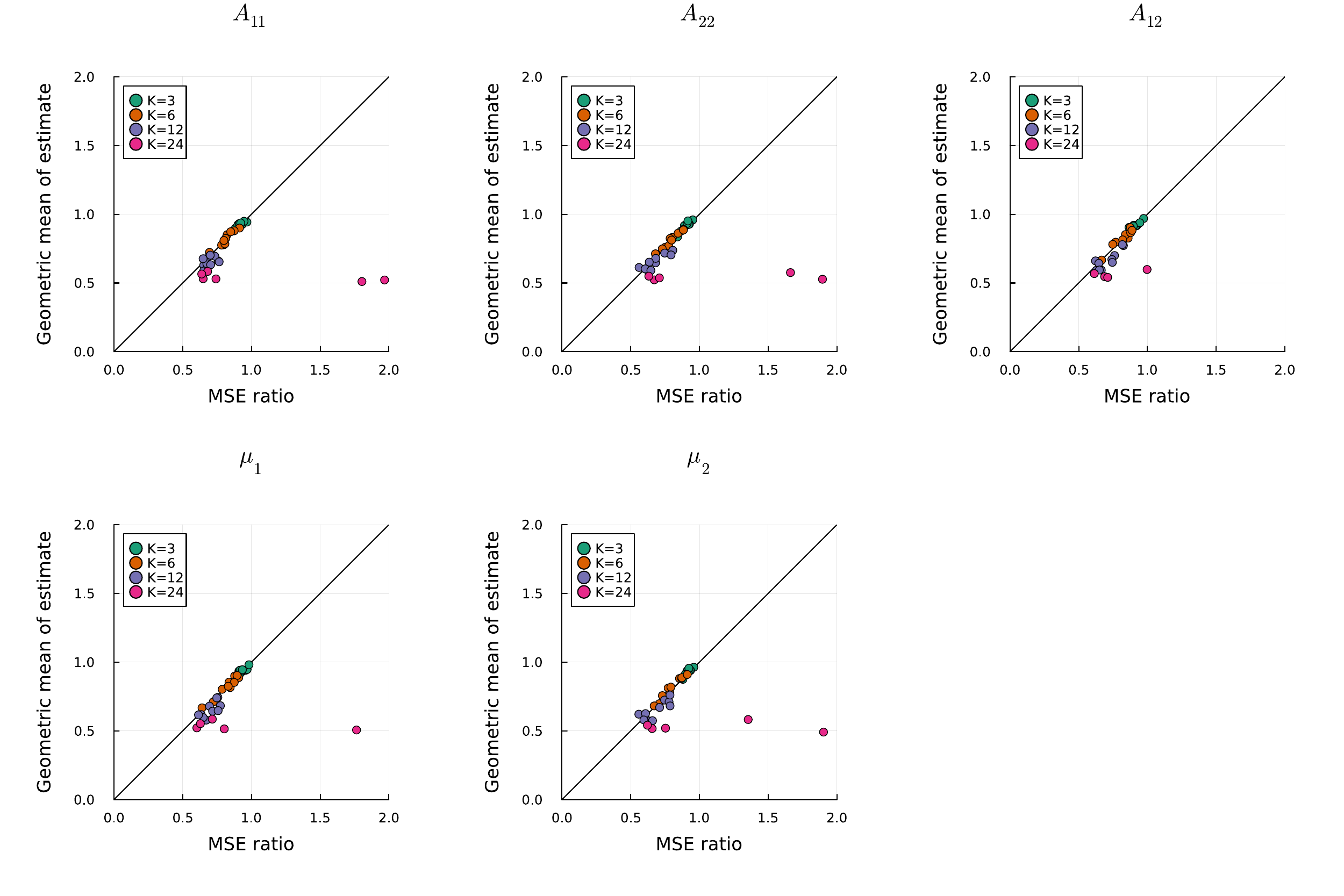}
    \caption{MSE ratio of PPI model for $\hat\theta[\hat\theta_\mathrm{wSM}]$ relative to $\hat\theta_\mathrm{wSM}$ versus the geometric mean of estimates given by \eqref{eq:estimate_improvement_M}. Points of the same color correspond to different pairs of $\tilde{v}_\alpha$. The horizontal axis is truncated at 2.
    Values less than 1 on the horizontal axis indicate that corresponding SMoM estimator improves the variance of the score matching estimator. Points near the diagonal indicate that the estimate of the asymptotic relative efficiency is reliable.}
    \label{fig:ppi_improvement_estimate}
\end{figure}

\section{Conclusion}
\label{sec:concl}
In this paper, we studied the geometry of SMoM estimators focusing on $W$-orthogonality and $\A_{\theta^\star}$-orthogonality.

The canonical decomposition (Theorem~\ref{thm:smom_decomp}) gives the asymptotoic linear representation of SMoM estimators after centering the score matching estimator, and it implies that the asymptotic linear representation of SMoM estimators is characterized by its $W$-orthogonal term.
Using the fact that the $W$-orthogonality does not imply $\A_{\theta^\star}$-orthogonality, we constructed a SMoM estimator which improves the asymptotic variance of the score matching estimator (Theorem~\ref{thm:improve}).
Through the numerical experiments, we comfirmed that the improvement is effective in finite sample settings under an appropriate choice of $K$.

The unexpected connection between the Wasserstein geometry and SMoM presents further characterization of the asymptotic efficiency of the score matching estimator.
We showed that the score matching estimator is asymptotically efficient if and only if the Fisher score functions span the same space as the Wasserstein score functions (Theorem~\ref{thm:lowerbound}).
We also showed that the $W$-orthogonal term of the Wasserstein score function characterizes the gap of the asymptotic variance between the score matching estimator and the MLE (Lemma~\ref{lem:gap_mle_sm}).
The geometry of SMoM may provide further connection between the three geometries: the geometry of score matching, Fisher-Rao geometry, and Wasserstein geometry.

\section{Acknowledgement}

The authors would like to thank Takeru Matsuda, Yoshikazu Terada, and Shotaro Yagishita for helpful comments.
This work is supported by JSPS KAKENHI (21H05205, 23K11024), and MEXT (JPJ010217).

\bibliographystyle{biometrika}
\bibliography{cite.bib}

\appendix
\renewcommand{\thesection}{Appendix \Alph{section}}

\section{Regularity conditions}
\label{sec:regularity}

This appendix summarizes the regularity conditions that we use in this paper.

We assume the following conditions for the model $\{q_\theta\mid\theta\in\Theta\}$ and test functions $f_\theta$:

\begin{enumerate}
    \item[A1.] $\E_{\theta}[(\A_{\theta}f_\theta)^2]<\infty$ and $\E_{\theta}[\A_{\theta}f_{\theta}]=0$.
    \item[A2.] $\E_{\theta}[\A_{\theta}(\partial_{\theta_j}f_{\theta})]=0,\quad j=1,\dots,d$.
    \item[A3.] $\E_{\theta}[|\ang{f_\theta,\nabla_x\partial_{\theta_j}\log q_{\theta}}|]<\infty,\quad j=1,\dots,d$.
    \item[A4.] $\partial_{\theta_j}\E_{\theta}[\A_{\theta}f_\theta]=\int\partial_{\theta_j}(q_\theta\A_{\theta}f_{\theta})\d x,\quad j=1,\dots,d$.  
\end{enumerate}
The condition $\E_{\theta}[\A_{\theta}f_\theta]=0$ is satisfied if the boundary condition $\lim_{\|x\|\to\infty}q_{\theta}(x)f_{\theta}(x)=0$ holds.
Condition A2 is used in the proof of Lemma~\ref{lem:G_mat}, the latter part of Theorem~\ref{thm:improve}, and Lemma~\ref{lem:connect_two_innerprod}.
Condition A4 is used in the proof of Lemma~\ref{lem:connect_two_innerprod}.

We also assume the following conditions for the Fisher score functions $\partial_{\theta_j}\log q_{\theta}$ and the Wasserstein score functions $\Phi_{\theta,j}$:

\begin{enumerate}
    \item[A5.] $\E_{\theta}[\A_{\theta}(\Phi_{\theta,j}\nabla_x\partial_{\theta_k}\log q_{\theta})]=0$.
    \item[A6.] $\E_{\theta}[\A_{\theta}(\partial_{\theta_j}\log q_{\theta}\nabla_x\Phi_{\theta,k})]=0$.
    \item[A7.] $\E_{\theta}[\A_{\theta}(\partial_{\theta_j}\log q_{\theta}\nabla_x\partial_{\theta_k}\log q_\theta)]=0$.
    \item[A8.] $\E_{\theta}[\A_{\theta}(\Phi_{\theta,j}\nabla_x\Phi_{\theta,k})]=0$. 
\end{enumerate}
These conditions are used in the proof of Theorem~\ref{thm:lowerbound}.

For the SMoM estimator based on the test functions $f_{\theta,1},\dots,f_{\theta,d}$, we assume the following conditions:
\begin{enumerate}
    \item[A9.] $\A_{\theta^\star}f_{\theta^\star,1},\dots,\A_{\theta^\star}f_{\theta^\star,d}$ are linearly independent.
    \item[A10.] The matrix $G_\mathrm{SMoM}\in\R^{d\times d}$ whose $(j,k)$-th entry is defined by \[(G_\mathrm{SMoM})_{jk}\coloneqq\E_{\theta^\star}\Big[\partial_{\theta_k}\A_{\theta}f_{\theta,j}\Big|_{\theta=\theta^\star}\Big]\] is invertible.
    \item[A11.] $\hat\theta_\mathrm{SMoM}$ is consistent and asymptotic linear; that is, $\hat\theta_\mathrm{SMoM}-\theta^\star$ has the following representation: \[\hat\theta_\mathrm{SMoM}-\theta^\star=-G_\mathrm{SMoM}^{-1}\frac{1}{n}\sum_{i=1}^{n}\begin{pmatrix}\A_{\theta^\star}f_{\theta^\star,1}(X_i)\\\vdots\\\A_{\theta^\star}f_{\theta^\star,d}(X_i)\end{pmatrix} + \smallop.\]
\end{enumerate}

\section{Proofs in Section~\ref{sec:Wasserstein}}
\label{sec:Wasserstein_proof}

This appendix provides the proofs for the results in Section~\ref{sec:Wasserstein}.

\begin{proof}[Proof of Lemma~\ref{lem:connect_two_innerprod}]
    By the Leibniz rule and the definition of the Wasserstein score function, we have
    \begin{align*}
        0&=\partial_{\theta_j}\E_{\theta}[\A_{\theta}f_{\theta}]|_{\theta=\theta^\star}\\
        &=\E_{\theta^\star}[\partial_{\theta_j}\A_{\theta}f_{\theta}|_{\theta=\theta^\star}]+\E_{\theta^\star}[(\A_{\theta^\star}f_{\theta^\star})(\partial_{\theta_j}\log{q}_{\theta^\star})]\\
        &=\E_{\theta^\star}[\partial_{\theta_j}\A_{\theta}f_{\theta}|_{\theta=\theta^\star}]-\E_{\theta^\star}[(\A_{\theta^\star}f_{\theta^\star})\A_{\theta^\star}(\nabla_x\Phi_{\theta^\star,j})],\quad j=1,\dots,d.
    \end{align*}
    Combining this with the relation $\E_{\theta^\star}[\partial_{\theta_j}\A_{\theta}f_{\theta}|_{\theta=\theta^\star}]=\E_{\theta^\star}[\ang{f_{\theta^\star},\nabla_{x}\partial_{\theta_j}\log q_{\theta^\star}}]$ from Lemma~\ref{lem:G_mat}, we obtain
    \[\E_{\theta^\star}[\ang{f_{\theta^\star},\nabla_{x}\partial_{\theta_j}\log q_{\theta^\star}}]=\E_{\theta^\star}[(\A_{\theta^\star}f_{\theta^\star})(\A_{\theta^\star}\nabla_x\Phi_{\theta^\star,j})],\]
    which completes the proof.
\end{proof}

\begin{proof}[Proof of Lemma~\ref{lem:gap_mle_sm}]
    Let $F\in\R^{d\times d}$ be the inner product matrices whose $(j,k)$-th entry is defined by
    \[F_{jk}\coloneqq \E_{\theta^\star}[\ang{\nabla_x\Phi_{\theta^\star,j},\nabla_{x}\partial_{\theta_k}\log q_{\theta^\star}}],\]
    and $u_j^\star$ is defined by
    \begin{equation}
        \label{eq:wasserstein_score_orth}
        u_{j}^{\star}\coloneqq\sum\limits_{k=1}^{d}(GF^{-1})_{jk}\nabla_x\Phi_{\theta^\star,k}-\nabla_{x}\partial_{\theta_j}\log q_{\theta^\star}.
    \end{equation}
    Note that this $u_j^\star$ is $W$-orthogonal to the test functions of score matching:
    \[\E_{\theta^\star}[\ang{u_j^\star,\nabla_x\partial_{\theta_k}\log q_{\theta^\star}}]=0,\quad k=1,\dots,d.\]
    We will show the relation 
    \begin{align}\AVar\left[\hat\theta_\mathrm{SM}\right]-\AVar\left[\hat\theta_\mathrm{MLE}\right]=G^{-1}TG^{-1},
    \label{eq:target in Lemma 4}
    \end{align}
    where $T\in\R^{d\times d}$ is the inner product matrix whose $(j,k)$-th entry is defined by 
    \[T_{jk}\coloneqq\E_{\theta^\star}[(\A_{\theta^\star}u_j^\star)(\A_{\theta^\star}u_k^\star)].\]

    We first relate $F$ to the Fisher information matrix $I(\theta^{\star})$.
    Using $u_j^\star$, we decompose $\nabla_x\Phi_{\theta^\star,j}$ as follows:
    \[\nabla_x\Phi_{\theta^\star,j}=\sum_{k=1}^d(FG^{-1})_{jk}\left(\nabla_{x}\partial_{\theta_k}\log q_{\theta^\star}+u_k^\star\right).\]
    Observe the relation
    \[\partial_{\theta_k}\partial_{\theta_j}\log q_{\theta^\star}=-\partial_{\theta_k}\A_{\theta^\star}(\nabla_x\Phi_{\theta^\star,j})=-\A_{\theta^\star}(\nabla_x\partial_{\theta_k}\Phi_{\theta^\star,j})-\ang{\nabla_x\Phi_{\theta,j},\nabla_{x}\partial_{\theta_k}\log q_{\theta^\star}}.\]
    Together with $\E_{\theta^\star}[\A_{\theta^\star}(\nabla_x\partial_{\theta_k}\Phi_{\theta^\star,j})]=0$ under the regularity conditions, taking the expectation of this yields 
    \begin{align}
    I(\theta^\star)=F.
    \label{eq:Fisher=F}
    \end{align}
    
    We next show \eqref{eq:target in Lemma 4}.
    Using Theorem~\ref{thm:smom_decomp}, we can decompose the asymptotic variance $\AVar[\hat\theta_\mathrm{MLE}]$:
    \[\AVar\left[\hat\theta_\mathrm{MLE}\right]=\AVar\left[\hat\theta_\mathrm{SM}\right]+G^{-1}(T+S+S^\top)G^{-1},\]
    where $S\in\R^{d\times d}$ is the inner produce matrix whose $(j,k)$-entry is defined by \[S_{jk}\coloneqq\E_{\theta^\star}[\A_{\theta^\star}u_{j}^\star\A_{\theta^\star}(\nabla_x\partial_{\theta_k}\log q_{\theta^\star})].\]
    Note that we have $\E_{\theta^\star}[\A_{\theta^\star}(\nabla_x\Phi_{\theta^\star,j})\A_{\theta^\star}(\nabla_{x}\partial_{\theta_k}\log q_{\theta^\star})]=G_{jk}$ by Lemma~\ref{lem:connect_two_innerprod}.
    Note also that
    substituting \eqref{eq:wasserstein_score_orth} to $S_{jk}$, we have
    \begin{align*}
        S_{jk}
        &=\sum\limits_{l=1}^{d}(GF^{-1})_{jl}\E_{\theta^\star}[\A_{\theta^\star}(\nabla_x\Phi_{\theta^\star,l})\A_{\theta^\star}(\nabla_{x}\partial_{\theta_k}\log q_{\theta^\star})]\\
        &\hphantom{=}-\E_{\theta^\star}[\A_{\theta^\star}(\nabla_{x}\partial_{\theta_j}\log q_{\theta^\star})\A_{\theta^\star}(\nabla_{x}\partial_{\theta_k}\log q_{\theta^\star})]\\
        &=(GI(\theta^\star)^{-1}G)_{jk}-\E_{\theta^\star}[\A_{\theta^\star}(\nabla_{x}\partial_{\theta_j}\log q_{\theta^\star})\A_{\theta^\star}(\nabla_{x}\partial_{\theta_k}\log q_{\theta^\star})],
    \end{align*}
    where the identity \eqref{eq:Fisher=F} is also used.
    Thus, the asymptotic variance $\AVar[\hat\theta_\mathrm{MLE}]$ is rearranged as 
    \[\AVar[\hat\theta_\mathrm{MLE}]=\AVar[\hat\theta_\mathrm{SM}]+G^{-1}TG^{-1}+2I(\theta^\star)^{-1}-2\AVar[\hat\theta_\mathrm{SM}].\]
    Since $\AVar[\hat\theta_\mathrm{MLE}]=I(\theta^\star)^{-1}$, we finally get $\AVar[\hat\theta_\mathrm{SM}]-\AVar[\hat\theta_\mathrm{MLE}]=G^{-1}TG^{-1}$,
    which completes the proof.
\end{proof}

For the proof of Theorem~\ref{thm:lowerbound}, we prepare the following lemma.
\begin{lem}
    \label{lem:lowerbound}
    For $h:\R^p\to\R$ satisfying $\E_{\theta^\star}[\A_{\theta^\star}(h\nabla_x h)]=0$, we have $\A_{\theta^\star}(\nabla_x h)=0$ if and only if the identity $\nabla_x h=0$ holds with probability 1.
\end{lem}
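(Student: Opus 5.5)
The plan is to expand $\A_{\theta^\star}(h\nabla_x h)$ with the product rule for the divergence-based Stein operator. Since $\A_{\theta}f=\div f+\ang{f,\score_\theta}$, for a scalar $h$ and a vector field $g$ we have
\[
\A_{\theta^\star}(h g)=h\,\A_{\theta^\star}g+\ang{\nabla_x h,g}.
\]
Taking $g=\nabla_x h$ gives the pointwise identity
\[
\A_{\theta^\star}(h\nabla_x h)=h\,\A_{\theta^\star}(\nabla_x h)+\norm{\nabla_x h}^2 .
\]
The assumption $\E_{\theta^\star}[\A_{\theta^\star}(h\nabla_x h)]=0$ then becomes the integration-by-parts identity
\[
\E_{\theta^\star}\bigl[\norm{\nabla_x h}^2\bigr]=-\E_{\theta^\star}\bigl[h\,\A_{\theta^\star}(\nabla_x h)\bigr].
\]
This identity is the whole engine of the proof.

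For the direction ``$\A_{\theta^\star}(\nabla_x h)=0\Rightarrow\nabla_x h=0$ a.s.'', substitute $\A_{\theta^\star}(\nabla_x h)=0$ into the identity. This gives $\E_{\theta^\star}[\norm{\nabla_x h}^2]=0$. Hence $\nabla_x h=0$ holds $q_{\theta^\star}$-almost surely, and therefore Lebesgue-a.e., because $q_{\theta^\star}>0$ a.e.

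For the converse, suppose $\nabla_x h=0$ with probability 1. Since $h$ is assumed smooth (as are all functions in the paper) and $q_{\theta^\star}>0$ a.e., $\nabla_x h$ vanishes on a set of full Lebesgue measure. By continuity it then vanishes everywhere, since a full-measure set is dense. Consequently $\div(\nabla_x h)=\Delta_x h=0$ and $\ang{\nabla_x h,\score_{\theta^\star}}=0$, so $\A_{\theta^\star}(\nabla_x h)=0$ identically.

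The main obstacle I expect is the measure-theoretic bookkeeping rather than the algebra. First, one must justify that $\E_{\theta^\star}[h\,\A_{\theta^\star}(\nabla_x h)]$ and $\E_{\theta^\star}[\norm{\nabla_x h}^2]$ are finite, so that splitting the expectation is legitimate. I would handle this under the integrability implicit in the hypothesis, in the same spirit as conditions A5--A8. Second, one must move from ``almost surely'' to ``everywhere'' via smoothness, which requires only continuity of $\nabla_x h$. If smoothness is not available, the converse should be read with $\A_{\theta^\star}(\nabla_x h)=0$ holding almost everywhere, in the sense of the weak divergence.
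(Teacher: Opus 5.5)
Your proposal is correct and follows the paper's proof: it uses the same pointwise identity $\A_{\theta^\star}(h\nabla_x h)=h\,\A_{\theta^\star}(\nabla_x h)+\norm{\nabla_x h}^2$, substitutes $\A_{\theta^\star}(\nabla_x h)=0$, and takes expectations to get $\E_{\theta^\star}[\norm{\nabla_x h}^2]=0$, while the converse is dismissed by linearity. If you substitute pointwise before taking the expectation, as the paper does, your integrability concern about splitting off $\E_{\theta^\star}[h\,\A_{\theta^\star}(\nabla_x h)]$ disappears; your continuity argument for the converse is harmless extra care.
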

\begin{proof}
    If $\nabla_x h=0$, we have $\A_{\theta^\star}(\nabla_x h)=0$ by the linearity of $\A_{\theta^\star}$.
    Suppose $\A_{\theta^\star}(\nabla_x h)=0$. Observe the relation
    \[\A_{\theta^\star}(h\nabla_x h)=h\A_{\theta^\star}(\nabla_x h)+\norm{\nabla_x h}^2=\norm{\nabla_x h}^2.\]
    Taking the expectation of this yields $\E_{\theta^\star}[\norm{\nabla_x h}^2]=0$ and thus implies $\nabla_x h=0$ with probability 1.
\end{proof}

\begin{proof}[Proof of Theorem~\ref{thm:lowerbound}]
    $(1)\Rightarrow(3)$:
        Suppose the score matching estimator is asymptotically efficient: $\AVar[\hat\theta_\mathrm{SM}]=\AVar[\hat\theta_\mathrm{MLE}]$.
        By Lemma~\ref{lem:gap_mle_sm}, it implies $\E_{\theta^\star}[(\A_{\theta^\star}u_j^\star)^2]=0$, that is, $\A_{\theta^\star}u_j^\star=0$ with probability 1, where $u_j^\star$ is defined by \eqref{eq:wasserstein_score_orth}.
        Since there exists $h_j\colon\R^d\to\R$ such that 
        two identities
        $u_j^\star=\nabla_x h_j$ and $\E_{\theta^\star}[h_j\nabla_x h_j]=0$ hold under the regularity conditions, we have $u_j^\star=0$ with probability 1 by Lemma~\ref{lem:lowerbound}.
        Substituting $u_j^\star=0$ to \eqref{eq:wasserstein_score_orth}, we get
        \[\nabla_x\partial_{\theta_j}\log q_{\theta^\star}=\sum\limits_{k=1}^{d}(GF^{-1})_{jk}\nabla_x\Phi_{\theta^\star,k}.\]
        Thus, there exists a constant $c_j\in\R$ such that
        \[\partial_{\theta_j}\log q_{\theta^\star}=\sum\limits_{k=1}^{d}(GF^{-1})_{jk}\Phi_{\theta^\star,k}+c_j.\]
        Since $\E_{\theta^\star}[\Phi_{\theta^\star,j}]=0$ and $\E_{\theta^\star}[\partial_{\theta_j}\log q_{\theta^\star}]=0$, taking the expectation of this gives $c_j=0$,
        which proves $(1)\Rightarrow(3)$.

    $(3)\Rightarrow(1)$:
        Suppose there exists a matrix $\Lambda\in\R^{d\times d}$ such that
        \[\partial_{\theta_j}\log q_{\theta^\star}=\sum_{k=1}^d\Lambda_{jk}\Phi_{\theta^\star,k}.\]
        Applying $h\mapsto \A_{\theta^\star}(\nabla_x h)$ on both sides, we have
        \begin{align*}
            \A_{\theta^\star}(\nabla_x\partial_{\theta_j}\log q_{\theta^\star})
            &=\sum_{k=1}^d\Lambda_{jk}\A_{\theta^\star}(\nabla_x\Phi_{\theta^\star,k})\\
            &=-\sum_{k=1}^d\Lambda_{jk}\partial_{\theta_{j}}\log q_{\theta^\star},
        \end{align*}
        which implies that the estimating equation of the score matching estimator coincides with the MLE. Thus, 
        the score matching estimator is asymptotically efficient, implying $(3)\Rightarrow(1)$.

    $(3)\Rightarrow(2)$:
        Suppose there exists a matrix $\Lambda\in\R^{d\times d}$ such that
        \[\partial_{\theta_j}\log q_{\theta^\star}=\sum_{k=1}^d\Lambda_{jk}\Phi_{\theta^\star,k}.\]
        Fix $u^\star\colon\R^d\to\R$ that is $W$-orthogonal to $\nabla_x\partial_{\theta_j}\log q_{\theta^\star}$ for $j=1,\dots,d$ arbitrarily.
        Combining the assumption with Lemma~\ref{lem:connect_two_innerprod}, we have
        \begin{align*}
            \E_{\theta^\star}[(\A_{\theta^\star}u^\star)\A_{\theta^\star}(\nabla_x\partial_{\theta_j}\log q_{\theta^\star})]
            &=\sum_{k=1}^d\Lambda_{jk}\E_{\theta^\star}[(\A_{\theta^\star}u^\star)\A_{\theta^\star}(\nabla_x\Phi_{\theta^\star,j})]\\
            &=\sum_{k=1}^d\Lambda_{jk}\E_{\theta^\star}[\ang{u^\star,\nabla_x\partial_{\theta_k}\log q_{\theta^\star}}]\\
            &=0,
        \end{align*}
        which shows $u^\star$ is $\A_{\theta^\star}$-orthogonal to $\nabla_x\partial_{\theta_j}\log q_{\theta^\star}$ and completes $(3)\Rightarrow(2)$.
    
    $(2)\Rightarrow(1)$:
        Suppose for any element $u^\star$ that is $W$-orthogonal to $\nabla_x\partial_{\theta_j}\log q_{\theta^\star}$, it is also $\A_{\theta^\star}$-orthogonal to $\nabla_x\partial_{\theta_j}\log q_{\theta^\star}$ for $j = 1,\dots,d$.
        Let $u_j^\star$ be the $W$-orthogonal elements defined by \eqref{eq:wasserstein_score_orth}.
        By Lemma~\ref{lem:connect_two_innerprod}, we have
        \[\E_{\theta^\star}[(\A_{\theta^\star}u_j^\star)\A_{\theta^\star}(\nabla_x\Phi_{\theta^\star,k})]=0,\quad k=1,\dots,d.\]
        By Theorem~\ref{thm:smom_decomp}, we can decompose the asymptotic variance of the MLE:
        \[\AVar\left[\hat\theta_\mathrm{MLE}\right]=\AVar\left[\hat\theta_\mathrm{SM}\right]+G^{-1}(T+S+S^\top)G^{-1},\]
        where $S\in\R^{d\times d}$ and $T\in\R^{d\times d}$ are the inner produce matrices whose $(j,k)$-entries are defined by 
        \begin{align*}
            S_{jk}&\coloneqq\E_{\theta^\star}[\A_{\theta^\star}u_{j}^\star\A_{\theta^\star}(\nabla_x\partial_{\theta_k}\log q_{\theta^\star})],\\
            T_{jk}&\coloneqq\E_{\theta^\star}[(\A_{\theta^\star}u_j^\star)(\A_{\theta^\star}u_k^\star)],
        \end{align*}
        respectively.
        Since $S=O$, we have $\AVar[\hat\theta_\mathrm{MLE}]-\AVar[\hat\theta_\mathrm{SM}]=G^{-1}TG^{-1}$. 
        However, we also have $\AVar[\hat\theta_\mathrm{SM}]-\AVar[\hat\theta_\mathrm{MLE}]=G^{-1}TG^{-1}$ by Lemma~\ref{lem:gap_mle_sm}, so we finally obtain $\AVar[\hat\theta_\mathrm{SM}]=\AVar[\hat\theta_\mathrm{MLE}]$, which implies that the score matching estimator is asymptotically efficient and concludes $(2)\Rightarrow(1)$.
\end{proof}

\section{Main results on more general domains}
\label{sec:main_result_M}

In this appendix, we give an extension of our main results on Riemannian manifolds.

\subsection{Preliminaries}

We begin with preparing notations, review a generalization of score matching \citep{Williams2022}, and introducing a Stein operator we employ in the Appendix.

Let $\M$ be an oriented and connected Riemannian manifold with corners, and let $\mathrm{d}x$ be the volume form given by the Riemannian metric on $\M$.
Let $\partial\M$ be the boundary of $\M$, and let $N$ and $\mathrm{d}s$ denote the unit outward normal vector field and volume form on $\partial\M$, respectively. 
The Riemannian metric on $\M$ and the induced norm on the tangent space $T_x\M$ are denoted by $\ang{\,\cdot\,,\,\cdot\,}$ and $\norm{\,\cdot\,}$, respectively.

Let $C^\infty(\M)$ be the space of smooth functions on $\M$, and let $\X^\infty(\M)$ be the space of smooth vector fields on $\M$.
The gradient operator on $\M$ is denoted by $\nabla_\M$, and the divergence of a vector field $f\in\X^\infty(\M)$ is denoted by $\divm{f}$.

Let the parameter space $\Theta\subset\R^d$ be an open set, and let $q_{\theta}$ be a  probability density on $\M$ with respect to $\mathrm{d}x$ such that $q_\theta>0$ almost everywhere.
We assume that the model $\{q_{\theta}\mid\theta\in\Theta\}$ is identifiable; that is, $q_{\theta}=q_{\theta'}$ if and only if $\theta=\theta'$.
We also assume that $q_{\theta}$ and test functions $f_{\theta,j}\in\X^\infty(\M)$ for $j=1,\dots,d$ are smooth with respect to both $\theta$ and $x$.

Let $X_1,\dots,X_n$ be i.i.d.~random variables from $q_{\theta^\star}$.
Fix a weight function $w\in C^\infty(\M)$ that is strictly positive almost everywhere.
The case $w=1$ and $\M=\R^d$ corresponds to the main text.
The weighted score matching estimator $\hat\theta_\mathrm{wSM}$ is given as a solution to the estimating equations:
\[\frac{1}{n}\sum_{i=1}^n \partial_{\theta_j}H_w(X_i,\theta)=0,\quad j=1,\dots,d,\]
where the weighted Hyvärinen score $H_w$ is defined by $H_w(x,\theta)\coloneqq w(x)\norm{\scorem_\theta(x)}^2/2+\divm(w(x)\scorem_\theta(x))$.
Under the condition $\int_\M \divm(wq_{\theta^\star}\scorem_\theta)\d x=0$, we have the relation
\[\E_{\theta^\star}[H_w(X,\theta)]=\E_{\theta^\star}[w\norm{\scorem_{\theta^\star}-\scorem_{\theta}}^2]/2+c,\]
where $X\sim q_{\theta^\star}$ and $c$ is a constant independent of $\theta$.
Thus, minimizing $\E_{\theta^\star}[H_w(X,\theta)]$ over $\Theta$ is equivalent to minimizing $\E_{\theta^\star}[w\norm{\scorem_{\theta^\star}-\scorem_{\theta}}^2]$.

We employ the following weighted Stein operator:
\[\A_{\theta}^wf\coloneqq \frac{\divm(q_{\theta}wf)}{q_{\theta}},\quad f\in\X^\infty(\M).\]
For a test function $f$ satisfying the condition $\int_\M \divm(q_{\theta^\star}wf)\d x=0$, the identity $\E_{\theta^\star}[\A^w_{\theta^\star}f]=0$ holds.
The SMoM estimator based on the test functions $f_{\theta,1},\dots,f_{\theta,d}$ is given as a solution to the estimating equations $n^{-1}\sum_{i=1}^n\A^w_{\theta}f_{\theta,j}(X_i)=0$ for $j=1,\dots,d$.

The following lemma states that the weighted score matching estimator is an SMoM estimator. 

\begin{lem}[Lemma 3.2 of \cite{Kume2026}]
    \label{lem:smom_sm_M}
    The SMoM estimator based on the test functions $f_{\theta,j}\coloneqq\nabla_{\M}\partial_{\theta_j}\log q_{\theta}, \enspace j=1,\dots,d$ is the weighted score matching estimator.
\end{lem}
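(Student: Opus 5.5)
The plan is to mirror the Euclidean proof of Lemma~\ref{lem:smom_sm}. I would show that, for each $j$, the test function $f_{\theta,j}=\nabla_\M\partial_{\theta_j}\log q_\theta$ satisfies the pointwise identity
\[\A^w_\theta f_{\theta,j}(x)=\partial_{\theta_j}H_w(x,\theta).\]
Once this holds, the two systems of estimating equations, $n^{-1}\sum_i\A^w_\theta f_{\theta,j}(X_i)=0$ and $n^{-1}\sum_i\partial_{\theta_j}H_w(X_i,\theta)=0$, coincide term by term. Hence they have the same solutions, and the SMoM estimator based on these test functions is $\hat\theta_\mathrm{wSM}$.

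To verify the identity, I would first expand the weighted Stein operator with the product rule for the Riemannian divergence, $\divm(\phi f)=\phi\divm f+\ang{\nabla_\M\phi,f}$, applied with $\phi=q_\theta$. Dividing by $q_\theta$ gives
\[\A^w_\theta f=\divm(wf)+w\ang{\scorem_\theta,f}.\]
Next I would differentiate $H_w$ in $\theta_j$. Here I use that the metric, $\nabla_\M$ and $\divm$ do not depend on $\theta$, and that $q_\theta$ is jointly smooth, so $\partial_{\theta_j}$ commutes with $\nabla_\M$ and $\divm$. This yields
\[\partial_{\theta_j}H_w=w\ang{\scorem_\theta,\nabla_\M\partial_{\theta_j}\log q_\theta}+\divm\bigl(w\nabla_\M\partial_{\theta_j}\log q_\theta\bigr).\]
Substituting $f=\nabla_\M\partial_{\theta_j}\log q_\theta$ into the expanded operator gives exactly the same two terms, which proves the identity.

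The only point requiring care is the interchange of $\partial_{\theta_j}$ with the manifold differential operators, together with the fact that the weight $w$ is $\theta$-independent. Both follow from the standing smoothness assumptions and from the fact that $\mathrm{d}x$ is the fixed Riemannian volume. Boundary conditions on $\partial\M$ play no role in this pointwise argument. They matter only for the unbiasedness $\E_{\theta^\star}[\A^w_{\theta^\star}f]=0$, which the lemma does not assert. So I expect no real obstacle beyond stating the product rule for $\divm$ on a manifold with corners, which holds pointwise in the interior and therefore almost everywhere.
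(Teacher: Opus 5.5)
Your proposal is correct and follows the paper's proof. The paper likewise observes that $f_{\theta,j}=\nabla_{\M}\partial_{\theta_j}\log q_\theta$ satisfies $\A^w_{\theta}f_{\theta,j}=\partial_{\theta_j}H_w(\cdot,\theta)$ pointwise, so the two systems of estimating equations coincide; your product-rule expansion $\A^w_\theta f=\divm(wf)+w\ang{\scorem_\theta,f}$ just writes out the computation the paper leaves implicit.
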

\begin{proof}   
    Since $f_{\theta,j}=\nabla_{\M}\partial_{\theta_j}\log q_\theta$ is a solution of the equation $\A^w_{\theta}f_{\theta,j}(x)=\partial_{\theta_j}H_w(x,\theta)$,
    the estimating equations of weighted score matching coincide with that of SMoM based on the test functions.
\end{proof}

\begin{remark}[The weight function $w$]
    The weight function $w$ assists in satisfying the boundary condition.
    In $\M=\R^d$, the condition $\int_\M \divm(wq_{\theta^\star}f)\d x=0$ is satisfied for $f\in\X^\infty(\M)$ if we have $\lim_{\|x\|\to\infty}w(x)q_{\theta^\star}(x)f(x)=0$.
    If $\M$ is compact and $w|_{\partial\M}=0$, by Stokes' theorem, we have
    \[\int_\M\divm(wq_{\theta^\star}f)\d x=\int_{\partial\M}wq_{\theta^\star}\ang{f,N}\d s,\]
    which automatically vanishes.
\end{remark}

\subsection{Regularity conditions for the generalization}
\label{sec:regularity_M}
We assume the following conditions for the model $\{q_\theta\mid\theta\in\Theta\}$ and test functions $f_\theta$:

\begin{enumerate}
    \item[C1.] $\E_{\theta}[(\A^w_{\theta}f_\theta)^2]<\infty$ and $\E_{\theta}[\A^w_{\theta}f_{\theta}]=0$.
    \item[C2.] $\E_{\theta}[\A^w_{\theta}(\partial_{\theta_j}f_{\theta})]=0,\quad j=1,\dots,d$.
    \item[C3.] $\E_{\theta}[|w\ang{f_\theta,\nabla_\M\partial_{\theta_j}\log q_{\theta}}|]<\infty,\quad j=1,\dots,d$.
\end{enumerate}
For the SMoM estimator based on the test functions $f_{\theta,1},\dots,f_{\theta,d}$, we assume the following conditions:
\begin{enumerate}
    \item[C4.] $\A^w_{\theta^\star}f_{\theta^\star,1},\dots,\A^w_{\theta^\star}f_{\theta^\star,d}$ are linearly independent.
    \item[C5.] The matrix $G_\mathrm{SMoM}\in\R^{d\times d}$ whose $(j,k)$-th entry is defined by \[(G_\mathrm{SMoM})_{jk}\coloneqq\E_{\theta^\star}\Big[\partial_{\theta_k}\A^w_{\theta}f_{\theta,j}\Big|_{\theta=\theta^\star}\Big]\] is invertible.
    \item[C6.] $\hat\theta_\mathrm{SMoM}$ is consistent and asymptotic linear; that is, $\hat\theta_\mathrm{SMoM}-\theta^\star$ has the following representation: \begin{align}\hat\theta_\mathrm{SMoM}-\theta^\star=-G_\mathrm{SMoM}^{-1}\frac{1}{n}\sum_{i=1}^{n}\begin{pmatrix}\A^w_{\theta^\star}f_{\theta^\star,1}(X_i)\\\vdots\\\A^w_{\theta^\star}f_{\theta^\star,d}(X_i)\end{pmatrix} + \smallop.\label{eq:SMoM linear M}\end{align}
\end{enumerate}

\subsection{Generalized canonical decomposition of SMoM estimators}

We then present extensions of our main result.
Under the regularity conditions and Lemma~\ref{lem:smom_sm_M}, the weighted score matching estimator $\hat\theta_\mathrm{wSM}$ has the following asymptotic linear representation:
\begin{align}
\hat\theta_\mathrm{wSM}-\theta^\star
    &=-G^{-1}\frac{1}{n}\sum_{i=1}^{n}\begin{pmatrix}\A^w_{\theta^\star}(\nabla_{\M}\partial_{\theta_1}\log q_{\theta^\star})(X_i)\\\vdots\\\A^w_{\theta^\star}(\nabla_{\M}\partial_{\theta_d}\log q_{\theta^\star})(X_i)\end{pmatrix}+ \smallop\label{eq:SM linear M}
\end{align}
where $G\in\R^{d\times d}$ are the matrices whose $(j,k)$-th entries are defined by
\[G_{jk}\coloneqq\E_{\theta^\star}\Big[\partial_{\theta_k}\A^w_{\theta}(\nabla_{\M}\partial_{\theta_j}\log q_\theta)\Big|_{\theta=\theta^\star}\Big].\]

We first introduce weighted $W$-orthogonality and $\A^w_{\theta^\star}$-orthogonality.
\begin{dfn}
    A test function $f$ is called weighted $W$-orthogonal to a test function $g$ if we have $\E_{\theta^\star}[w\ang{g,f}]=0$.
    A test function $f$ is called $\A^w_{\theta^\star}$-orthogonal to a test function $g$ if we have $\E_{\theta^\star}[(\A^w_{\theta^\star}g)(\A^w_{\theta^\star}f)]=0$.
\end{dfn}

The following theorem, which is a generalized result of Theorem~\ref{thm:smom_decomp}, states that the asymptotic linear representation of the SMoM estimator after centering the weighted score matching estimator.
It is proven by combining the asymptotic linear representation \eqref{eq:SMoM linear M}-\eqref{eq:SM linear M} and Lemma~\ref{lem:G_mat_M}, and the proof follows the same steps as that of Theorem~\ref{thm:smom_decomp}, so it is omitted.

\begin{thm}[Generalized canonical decomposition of SMoM estimator]
    \label{thm:smom_decomp_M}
    Under the regularity conditions, the asymptotic linear representation of $\hat\theta_\mathrm{SMoM}$ is decomposed as follows:
    \[\hat\theta_\mathrm{SMoM}-\theta^\star=\Big(\hat\theta_\mathrm{wSM}-\theta^\star\Big)-G^{-1}\frac{1}{n}\sum_{i=1}^{n}\begin{pmatrix}\A^w_{\theta^\star}u_1^\star(X_i)\\\vdots\\\A^w_{\theta^\star}u_d^\star(X_i)\end{pmatrix} + \smallop , \]
    where each $u_{j}^\star\in\X^\infty(\M)$ is weighted $W$-orthogonal to the test function corresponding to weighted score matching:
    \[
    \E_{\theta^\star}[w\ang{u_j^\star,\nabla_{\M}\partial_{\theta_k}\log q_{\theta^\star}}]=0, \quad k=1,\dots,d.
    \]
\end{thm}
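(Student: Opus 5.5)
The proof follows the Euclidean proof of Theorem~\ref{thm:smom_decomp}, with the flat gradient, divergence and inner product replaced by their Riemannian and weighted counterparts. The first step is a weighted analogue of Lemma~\ref{lem:G_mat} (Lemma~\ref{lem:G_mat_M}):
\[
\E_{\theta^\star}\Big[\partial_{\theta_k}\A^w_{\theta}f_{\theta,j}\Big|_{\theta=\theta^\star}\Big]=\E_{\theta^\star}\big[w\ang{f_{\theta^\star,j},\nabla_{\M}\partial_{\theta_k}\log q_{\theta^\star}}\big].
\]
To obtain it, I will expand $\A^w_\theta f=\divm(wf)+w\ang{f,\nabla_\M\log q_\theta}$ using the product rule for $\divm$. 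Differentiating in $\theta_k$ then gives
\[
\partial_{\theta_k}\A^w_\theta f_{\theta,j}=\A^w_\theta(\partial_{\theta_k}f_{\theta,j})+w\ang{f_{\theta,j},\nabla_\M\partial_{\theta_k}\log q_\theta},
\]
because the metric does not depend on $\theta$. Condition C2 makes the first term vanish in expectation at $\theta^\star$, and condition C3 guarantees that the second expectation is finite. Taking $f_{\theta,j}=\nabla_\M\partial_{\theta_j}\log q_\theta$ shows that $G_{jk}=\E_{\theta^\star}[w\ang{\nabla_\M\partial_{\theta_j}\log q_{\theta^\star},\nabla_\M\partial_{\theta_k}\log q_{\theta^\star}}]$, a Gram matrix for the weighted inner product $\ang{f,g}_w\coloneqq\E_{\theta^\star}[w\ang{f,g}]$.

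The second step is to project each $f_{\theta^\star,j}$ onto $\mathrm{span}\{\nabla_\M\partial_{\theta_k}\log q_{\theta^\star}\}$ in $\ang{\cdot,\cdot}_w$. This gives
\[
f_{\theta^\star,j}=\sum_k B_{jk}\big(\nabla_\M\partial_{\theta_k}\log q_{\theta^\star}+u_k^\star\big),
\]
where the $u_k^\star$ are weighted $W$-orthogonal to every $\nabla_\M\partial_{\theta_l}\log q_{\theta^\star}$. Concretely, the projection coefficients are $(F G^{-1})_{jk}$ with $F_{jk}=\ang{f_{\theta^\star,j},\nabla_\M\partial_{\theta_k}\log q_{\theta^\star}}_w$, and I set $B=FG^{-1}$ and $u_k^\star=\sum_l (B^{-1})_{kl}\,r_l$, where $r_l$ is the residual of $f_{\theta^\star,l}$. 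Lemma~\ref{lem:G_mat_M} then gives $G_\mathrm{SMoM}=F=BG$.

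The main obstacle is that $B$ must be invertible for $u_k^\star$ to be defined. This follows from the same identity: C5 says $G_\mathrm{SMoM}=BG$ is invertible, and $G$ is invertible because it appears in the asymptotic representation \eqref{eq:SM linear M}. Hence $B=G_\mathrm{SMoM}G^{-1}$ is invertible.

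The last step substitutes into \eqref{eq:SMoM linear M}. Using $G_\mathrm{SMoM}^{-1}=G^{-1}B^{-1}$ and the linearity of $\A^w_{\theta^\star}$, the factor $B$ cancels. The influence function splits into the weighted score matching part, which by \eqref{eq:SM linear M} equals $\hat\theta_\mathrm{wSM}-\theta^\star$ up to $\smallop$, and the remainder $-G^{-1}n^{-1}\sum_i(\A^w_{\theta^\star}u_k^\star(X_i))_k$. That is the claimed decomposition.
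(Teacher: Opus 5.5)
Your proposal is correct and follows the same route as the paper. The paper omits this proof as a direct transfer of the proof of Theorem~\ref{thm:smom_decomp}: a weighted $W$-orthogonal decomposition of $f_{\theta^\star,j}$, the identity $G_\mathrm{SMoM}=BG$ from Lemma~\ref{lem:G_mat_M}, and cancellation of $B$ in \eqref{eq:SMoM linear M}. Your explicit choice $B=FG^{-1}$, $u^\star=B^{-1}r$, and your derivation of the invertibility of $B$ from C5 (where the paper only asserts that $B$ must be invertible) make the argument tighter.
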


The following lemma tells us that $G_\mathrm{SMoM}$ is an inner product matrix, and in particular, $G$ is a Gram matrix.
\begin{lem}
    \label{lem:G_mat_M}
    Under the regularity conditions, for the test functions $f_{\theta,1},\dots,f_{\theta,d}$, we have \[\E_{\theta^\star}\Big[\partial_{\theta_k}\A^w_{\theta}f_{\theta,j}\Big|_{\theta=\theta^\star}\Big]=\E_{\theta^\star}[w\ang{f_{\theta^\star,j},\nabla_{\M}\partial_{\theta_k}\log q_{\theta^\star}}],\quad j,k=1,\dots,d.\]
    In particular, we have $G_{jk}=\E_{\theta^\star}\left[w\ang{\nabla_{\M}\partial_{\theta_j}\log q_{\theta^\star},\nabla_{\M}\partial_{\theta_k}\log q_{\theta^\star}}\right]$.
\end{lem}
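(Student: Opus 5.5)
The plan is to mimic the proof of Lemma~\ref{lem:G_mat}, with the weighted operator $\A^w_\theta$ and the manifold gradient and divergence in place of their Euclidean counterparts. The key step is a product-rule identity for $\partial_{\theta_k}\A^w_\theta f_{\theta,j}$. Once that identity is in hand, taking expectations and using condition C2 finishes the argument.

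\textbf{Step 1: the pointwise identity.} Since the weight $w$ and the volume form $\mathrm{d}x$ do not depend on $\theta$, write $q_\theta w f_{\theta,j}$ and use the Leibniz rule for $\divm$ on a product of a function and a vector field:
\[\A^w_\theta f_{\theta,j}=\frac{\divm(q_\theta w f_{\theta,j})}{q_\theta}=\divm(wf_{\theta,j})+w\ang{f_{\theta,j},\scorem_\theta}.\]
Differentiating in $\theta_k$ and commuting $\partial_{\theta_k}$ with $\divm$ and $\nabla_\M$ (allowed by smoothness in $(\theta,x)$) gives
\[\partial_{\theta_k}\A^w_\theta f_{\theta,j}=\divm(w\,\partial_{\theta_k}f_{\theta,j})+w\ang{\partial_{\theta_k}f_{\theta,j},\scorem_\theta}+w\ang{f_{\theta,j},\nabla_\M\partial_{\theta_k}\log q_\theta}.\]
The first two terms combine into $\A^w_\theta(\partial_{\theta_k}f_{\theta,j})$, which yields
\[\partial_{\theta_k}\A^w_\theta f_{\theta,j}=\A^w_\theta(\partial_{\theta_k}f_{\theta,j})+w\ang{f_{\theta,j},\nabla_\M\partial_{\theta_k}\log q_\theta}.\]
Here I use that the metric $\ang{\cdot,\cdot}$ is $\theta$-independent, so $\partial_{\theta_k}$ passes through it bilinearly.

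\textbf{Step 2: take expectations.} Evaluate at $\theta=\theta^\star$ and take $\E_{\theta^\star}$. Condition C2 gives $\E_{\theta^\star}[\A^w_{\theta^\star}(\partial_{\theta_k}f_{\theta^\star,j})]=0$, and condition C3 makes the remaining term integrable. This gives the claimed formula for $(G_\mathrm{SMoM})_{jk}$.

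\textbf{Step 3: specialise to weighted score matching.} Take $f_{\theta,j}=\nabla_\M\partial_{\theta_j}\log q_\theta$, which by Lemma~\ref{lem:smom_sm_M} are the test functions of weighted score matching. Then $G_{jk}=\E_{\theta^\star}[w\ang{\nabla_\M\partial_{\theta_j}\log q_{\theta^\star},\nabla_\M\partial_{\theta_k}\log q_{\theta^\star}}]$, which is a Gram matrix for the inner product $(f,g)\mapsto\E_{\theta^\star}[w\ang{f,g}]$.

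The only delicate point is Step 1 on a manifold with corners. There one must check that the product rule $\divm(\phi V)=\phi\,\divm V+\ang{\nabla_\M\phi,V}$ holds in the interior, and that no boundary term is needed. This works because all manipulations are pointwise and any boundary contributions are already absorbed into assumption C2. I do not expect real difficulty beyond this.
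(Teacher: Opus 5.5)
Your proposal is correct and follows the paper's proof. The paper uses the same identity $\partial_{\theta_k}\A^w_\theta f_{\theta,j}=\A^w_{\theta}(\partial_{\theta_k}f_{\theta,j})+w\ang{f_{\theta,j},\nabla_{\M}\partial_{\theta_k}\log q_\theta}$ and removes the first term in expectation via condition C2; your Step 1 just writes out the product-rule derivation that the paper states without proof.
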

\begin{proof}
    Observe the relation
    \[\partial_{\theta_k}\A^w_\theta f_{\theta,j}=\A^w_{\theta}(\partial_{\theta_k}f_{\theta,j})+w\ang{f_{\theta,j},\nabla_{\M}\partial_{\theta_k}\log q_\theta}.\]
    Together with $\E_{\theta^\star}[\A^w_{\theta^\star}(\partial_{\theta_k}f_{\theta^\star,j})]=0$ under the regularity conditions, taking the expectation of this yields $\E_{\theta^\star}[\partial_{\theta_k}\A^w_{\theta}f_{\theta,j}|_{\theta=\theta^\star}]=\E_{\theta^\star}[w\ang{f_{\theta^\star,j},\nabla_{\M}\partial_{\theta_k}\log q_{\theta^\star}}]$, which completes the proof.
\end{proof}

\begin{exm}[Exponential families on Riemannian manifolds]
Consider an exponential family whose density is defined by
\begin{equation}
    \label{eq:exp_fam_M}
    q_{\theta}(x)=\frac{1}{Z(\theta)}\exp\Big(t(x)^{\top}\theta+b(x)\Big),\enspace Z(\theta)=\int_\M\exp\Big(t(x)^{\top}\theta+b(x)\Big)\d{x},
\end{equation}
where $t_j\in C^\infty(\M),\enspace j=1,\dots,d$ is a sufficient statistics and $b\in C^\infty(\M)$ is a base measure.
The SMoM estimator $\hat\theta_\mathrm{SMoM}$ based on (parameter-independent) test functions $f_1,\dots,f_d$ has the closed-form solution:
\[\hat\theta_\mathrm{SMoM}=-G_{\mathrm{SMoM},n}^{-1}\frac{1}{n}\sum_{i=1}^{n}\begin{pmatrix}\divm(wf_1)(X_i) + w(X_i)\ang{f_1(X_i),\nabla_\M b(X_i)}\\\vdots\\\divm(wf_d)(X_i)+w(X_i)\ang{f_d(X_i),\nabla_\M b(X_i)}\end{pmatrix},\]
where $G_{\mathrm{SMoM},n}\in\R^{d\times d}$ is the empirical inner product matrix whose $(j,k)$-th entry is defined by $(G_{\mathrm{SMoM},n})_{jk} \coloneqq n^{-1}\sum_{i=1}^{n}w(X_i)\ang{f_j(X_i),\nabla_\M t_k(X_i)}$.
The score matching estimator $\hat\theta_\mathrm{wSM}$ is the SMoM estimator based on the test functions $f_{j}=\nabla_\M t_j$, which also has the closed-form solution \citep{Mardia2016,Scealy2023}:
\[\hat\theta_\mathrm{wSM}=-G_{n}^{-1}\frac{1}{n}\sum_{i=1}^{n}\begin{pmatrix}\divm(w\nabla_\M t_1)(X_i) + w(X_i)\ang{\nabla_\M t_1(X_i),\nabla_\M b(X_i)}\\\vdots\\\divm(w\nabla_\M t_d)(X_i)+w(X_i)\ang{\nabla_\M t_d(X_i),\nabla_\M b(X_i)}\end{pmatrix},\]
where $G_{n}\in\R^{d\times d}$ is the empirical inner product matrix whose $(j,k)$-th entry is defined by $(G_{n})_{jk} \coloneqq n^{-1}\sum_{i=1}^{n}w(X_i)\ang{\nabla_\M t_j(X_i),\nabla_\M t_k(X_i)}$.
Consider the test functions $f_j=\nabla_\M t_j + u_j^\star$ for $j=1,\dots,d$, where $u_j^\star$ is weighted $W$-orthogonal to $\nabla_\M t_k$ for $k=1,\dots,d$.
The SMoM estimator based on this test functions can be decomposed as follows:
\begin{align*}
    \hat\theta_\mathrm{SMoM}-\theta^\star
    &=-\big(G_n + (G_{\mathrm{SMoM},n}-G_n)\big)^{-1}\frac{1}{n}\sum_{i=1}^{n}\begin{pmatrix}\A^w_{\theta^\star}(\nabla_\M t_1)(X_i)+\A^w_{\theta^\star}u_1^\star(X_i)\\\vdots\\\A^w_{\theta^\star}(\nabla_\M t_d)(X_i)+\A^w_{\theta^\star}u_d^\star(X_i)\end{pmatrix}\\
    &=\Big(\hat\theta_\mathrm{wSM}-\theta^\star\Big)-G^{-1}\frac{1}{n}\sum_{i=1}^{n}\begin{pmatrix}\A^w_{\theta^\star}u_1^\star(X_i)\\\vdots\\\A^w_{\theta^\star}u_d^\star(X_i)\end{pmatrix}+\smallop.
\end{align*}
\end{exm}

\subsection{Improving asymptotic variance of weighted score matching estimator}
We next construct an SMoM estimator improving upon the weighted score matching estimator in the asymptotic variance.
Fix $\theta_0\in\Theta$ and $\tilde{v}_\alpha\in\X^\infty(\M),\enspace\alpha=1,\dots,K$ arbitrarily. 
Using the test function $\tilde{v}_\alpha$, we define $v_{\theta_0,\alpha}$ by the following orthogonalization procedure:
\begin{equation}
    \label{eq:orthogonalize_M}
    v_{\theta_0,\alpha}\coloneqq \tilde{v}_\alpha - \sum_{j=1}^{d}\left(F_{\theta_0}G_{\theta_0}^{-1}\right)_{\alpha j}\nabla_{\M}\partial_{\theta_j}\log q_{\theta_0},
\end{equation}
where $F_{\theta_0}\in\R^{K\times d}$ and $G_{\theta_0}\in\R^{d\times d}$ are inner product matrices whose $(\alpha,j)$-th entry and $(j,k)$-th entry are defined by 
\begin{align*}
    (F_{\theta_0})_{\alpha j}&\coloneqq \E_{\theta_0}\left[w\ang{\tilde{v}_\alpha,\nabla_{\M}\partial_{\theta_j}\log q_{\theta_0}}\right],\\
    (G_{\theta_0})_{jk}&\coloneqq \E_{\theta_0}\left[w\ang{\nabla_{\M}\partial_{\theta_j}\log q_{\theta_0},\nabla_{\M}\partial_{\theta_k}\log q_{\theta_0}}\right],
\end{align*}
respectively.
Observe the relation
\[\E_{\theta_0}[w\ang{\nabla_{\M}\partial_{\theta_j}\log q_{\theta_0},v_{\theta_0,\alpha}}]=0,\quad j=1,\dots,d.\]
Denote by $\hat\theta[\theta_0]$ the SMoM estimator based on the following test functions:
\begin{equation}
    \label{eq:testfunction_M} 
    f_{\theta,j}\coloneqq\nabla_{\M}\partial_{\theta_j}\log q_{\theta} - \sum_{\alpha=1}^{K}\left(S_{\theta_0}T_{\theta_0}^{-1}\right)_{j\alpha}v_{\theta_0,\alpha},\quad j=1,\dots,d,
\end{equation}
where $S_{\theta_0}\in\R^{d\times K}$ and $T_{\theta_0}\in\R^{K\times K}$ are inner product matrices whose $(j,\alpha)$-th entry and $(\alpha,\beta)$-th entry are defined by
\begin{align*}
    (S_{\theta_0})_{j\alpha}&\coloneqq\E_{\theta_0}\left[\A^w_{\theta_0}(\nabla_{\M}\partial_{\theta_j}\log q_{\theta_0})\A^w_{\theta_0}v_{\theta_0,\alpha}\right],\\
    (T_{\theta_0})_{\alpha\beta}&\coloneqq\E_{\theta_0}\left[\left(\A^w_{\theta_0}v_{\theta_0,\alpha}\right)\left(\A^w_{\theta_0}v_{\theta_0,\beta}\right)\right].
\end{align*}
For the matrices $F_{\theta_0},G_{\theta_0},S_{\theta_0},T_{\theta_0}$, we make the following assumption.
\begin{asm}
    \label{asm:improve_M}
    $F_{\theta_0},G_{\theta_0},S_{\theta_0},T_{\theta_0}$ are continuous at $\theta_0=\theta^\star$.
\end{asm}

The following theorem, which is generalized result of Theorem~\ref{thm:improve}, tells us that $\hat\theta[\theta^\star]$ improves the asymptotic variance of $\hat\theta_\mathrm{wSM}$.
Furthermore, such improvement remains even when  $\hat\theta_0$ is plugged in to $\theta^\star$.
\begin{thm}
    \label{thm:improve_M}
    Under the regularity conditions, we have
    \[\AVar\left[\hat\theta_{\mathrm{wSM}}\right]-\AVar\left[\hat\theta[\theta^\star]\right]
    =G^{-1}S_{\theta^\star}T_{\theta^\star}^{-1}S_{\theta^\star}^{\top}G^{-1}\succeq0.\]
    The equality $\AVar[\hat\theta_{\mathrm{wSM}}]=\AVar[\hat\theta[\theta^\star]]$ holds if and only if $S_{\theta^\star}=O$.
    Furthermore, under Assumption~\ref{asm:improve},
    for an estimator $\hat\theta_0$ satisfying $\hat\theta_0-\theta^\star=\largeop$, we have \[\hat\theta[\hat\theta_0]-\theta^\star=\hat\theta[\theta^\star]-\theta^\star+\smallop.\]
\end{thm}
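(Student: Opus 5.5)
The plan has two parts. The variance identity comes from Theorem~\ref{thm:smom_decomp_M} with $\theta_0=\theta^\star$ fixed. The plug-in statement comes from a stochastic expansion of the estimating equations in $\theta_0$.

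\emph{Variance identity.} Set $C\coloneqq -S_{\theta^\star}T_{\theta^\star}^{-1}$. The test functions \eqref{eq:testfunction_M} at $\theta_0=\theta^\star$ are $f_{\theta,j}=\nabla_\M\partial_{\theta_j}\log q_\theta+\sum_\alpha C_{j\alpha}v_{\theta^\star,\alpha}$. Each $v_{\theta^\star,\alpha}$ is weighted $W$-orthogonal to every $\nabla_\M\partial_{\theta_k}\log q_{\theta^\star}$, and $v_{\theta^\star,\alpha}$ does not depend on $\theta$, so $\partial_{\theta_k}f_{\theta,j}=\partial_{\theta_k}\nabla_\M\partial_{\theta_j}\log q_\theta$. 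Lemma~\ref{lem:G_mat_M} then gives $G_{\mathrm{SMoM}}=G$. The decomposition \eqref{eq:orthogonal_decomp_testfunction} therefore holds with $B=I$ and $u_j^\star=\sum_\alpha C_{j\alpha}v_{\theta^\star,\alpha}$. Theorem~\ref{thm:smom_decomp_M} yields
\[\hat\theta[\theta^\star]-\theta^\star=(\hat\theta_\mathrm{wSM}-\theta^\star)-G^{-1}\frac1n\sum_{i}\big(\psi(X_i)+C\phi(X_i)\big)+\smallop,\]
where $\psi=(\A^w_{\theta^\star}\nabla_\M\partial_{\theta_j}\log q_{\theta^\star})_j$ and $\phi=(\A^w_{\theta^\star}v_{\theta^\star,\alpha})_\alpha$, and the SM term already accounts for $\psi$. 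Computing the covariance gives
\[\AVar[\hat\theta[\theta^\star]]=G^{-1}\big(U+CS^\top+SC^\top+CTC^\top\big)G^{-1}.\]
With $C=-ST^{-1}$ this equals $G^{-1}UG^{-1}-G^{-1}ST^{-1}S^\top G^{-1}$, and $G^{-1}UG^{-1}=\AVar[\hat\theta_\mathrm{wSM}]$. This step needs $T$ to be invertible, i.e.\ the $\A^w_{\theta^\star}v_{\theta^\star,\alpha}$ are linearly independent; I assume this as part of the regularity conditions.

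\emph{Sign and equality.} Since $T\succ0$, the matrix $ST^{-1}S^\top$ is positive semidefinite, and conjugating by the invertible $G^{-1}$ preserves this. For the equality case, $G^{-1}ST^{-1}S^\top G^{-1}=O$ holds iff $ST^{-1}S^\top=O$. Writing $ST^{-1}S^\top=(ST^{-1/2})(ST^{-1/2})^\top$, this holds iff $ST^{-1/2}=O$, i.e.\ iff $S=O$.

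\emph{Plug-in.} Write $M(\theta_0)\coloneqq S_{\theta_0}T_{\theta_0}^{-1}$. Then
\[v_{\theta_0,\alpha}=\tilde v_\alpha-\sum_j\big(F_{\theta_0}G_{\theta_0}^{-1}\big)_{\alpha j}\nabla_\M\partial_{\theta_j}\log q_{\theta_0},\]
so $f_{\theta,j}[\theta_0]$ depends on $\theta_0$ only through finitely many coefficient matrices and the fields $\nabla_\M\partial_{\theta_j}\log q_{\theta_0}$.

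The main obstacle is the $\theta_0$-dependence of $\nabla_\M\partial_{\theta_j}\log q_{\theta_0}$ inside $v_{\theta_0,\alpha}$. I would handle it by linearity: the empirical estimating function at $\theta_0$ is
\[\frac1n\sum_i\A^w_\theta f_{\theta,j}[\theta_0](X_i)=\frac1n\sum_i\Big\{\A^w_\theta\nabla_\M\partial_{\theta_j}\log q_\theta-\sum_\alpha M(\theta_0)_{j\alpha}\A^w_\theta\tilde v_\alpha+\sum_{\alpha,k}\big(M(\theta_0)F_{\theta_0}G_{\theta_0}^{-1}\big)_{jk}\A^w_\theta\nabla_\M\partial_{\theta_k}\log q_{\theta_0}\Big\}(X_i).\]
Each bracketed average, evaluated at $\theta=\theta^\star$ and any $\theta_0$ near $\theta^\star$, is $\largeop$ by the central limit theorem, because it has mean zero under $\theta^\star$ up to an $O(\|\theta_0-\theta^\star\|)$ term. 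By Lemma~\ref{lem:G_mat_M}, that term is proportional to $\E_{\theta^\star}[w\ang{\tilde v_\alpha\text{ or }\nabla_\M\partial\log q_{\theta_0},\nabla_\M\partial_{\theta}\log q}]$ times $(\theta-\theta^\star)$, which is higher order.

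Assumption~\ref{asm:improve_M} gives $M(\hat\theta_0)F_{\hat\theta_0}G_{\hat\theta_0}^{-1}\to M(\theta^\star)F_{\theta^\star}G_{\theta^\star}^{-1}$ in probability. The coefficient errors therefore multiply $\largeop$ averages, giving $o_p(1)\cdot\largeop=\smallop$. A first-order expansion of $\nabla_\M\partial_{\theta_k}\log q_{\theta_0}$ around $\theta^\star$ contributes $(\hat\theta_0-\theta^\star)\times\largeop=\smallop$. The Jacobian in $\theta$ stays $G+o_p(1)$ because the correction fields remain $W$-orthogonal in the limit. Standard $Z$-estimator arguments (condition C6) then give $\hat\theta[\hat\theta_0]-\theta^\star=\hat\theta[\theta^\star]-\theta^\star+\smallop$.
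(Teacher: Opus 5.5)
Your proposal is correct and follows essentially the paper's proof. The variance identity comes from Theorem~\ref{thm:smom_decomp_M} with $u_j^\star=-\sum_\alpha(S_{\theta^\star}T_{\theta^\star}^{-1})_{j\alpha}v_{\theta^\star,\alpha}$ followed by completing the square. The plug-in step splits $v_{\hat\theta_0,\alpha}$ into $\tilde v_\alpha$ and the $\nabla_\M\partial_{\theta_j}\log q_{\hat\theta_0}$ part, then uses continuity of the coefficient matrices and a first-order expansion whose linear term is a mean-zero average times $O_{\mathrm{p}}(n^{-1/2})$; your check that the Jacobian stays $G+o_{\mathrm{p}}(1)$ is a point the paper glosses over.

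One slip: as written, your first display counts $\psi$ twice. It should read $\hat\theta[\theta^\star]-\theta^\star=-G^{-1}n^{-1}\sum_i(\psi(X_i)+C\phi(X_i))+o_{\mathrm{p}}(n^{-1/2})$, which is the form your covariance computation actually uses.
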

\begin{proof}
    We first show that $\AVar[\hat\theta_{\mathrm{wSM}}]-\AVar[\hat\theta[\theta^\star]]=G^{-1}S_{\theta^\star}T_{\theta^\star}^{-1}S_{\theta^\star}^{\top}G^{-1}$.
    Combining with Theorem~\ref{thm:smom_decomp_M} and \eqref{eq:testfunction_M}, we can decompose the asymptotic linear representation of $\hat\theta[\theta^\star]$:
    \[\hat\theta[\theta^\star]-\theta^\star=\Big(\hat\theta_\mathrm{wSM}-\theta^\star\Big)+G^{-1}S_{\theta^\star}T_{\theta^\star}^{-1}\frac{1}{n}\sum_{i=1}^{n}\Big(\A^w_{\theta^\star}v_{\theta^\star,1},\dots,\A^w_{\theta^\star}v_{\theta^\star,K}\Big)^\top + \smallop.\]
    Thus, we can also decompose the asymptotic variance of $\hat\theta[\theta^\star]$ as follows:
    \[\AVar\left[\hat\theta[\theta^\star]\right]=\AVar\left[\hat\theta_\mathrm{wSM}\right]+G^{-1}(-S_{\theta^\star}T_{\theta^\star}^{-1}S_{\theta^\star}^\top)G^{-1}.
    \]
    Since $T_{\theta^\star}$ and $G$ is positive definite, we have $\AVar[\hat\theta[\theta^\star]]=\AVar[\hat\theta_\mathrm{wSM}]$ if and only if $S_{\theta^\star}=0$ holds. 

    Next, we show $\hat\theta[\hat\theta_0]-\theta^\star=\hat\theta[\theta^\star]-\theta^\star+\smallop$.
    Using Theorem~\ref{thm:smom_decomp_M}, \eqref{eq:orthogonalize_M}, and \eqref{eq:testfunction_M}, we can decompose the asymptotically linear representation of $\hat\theta[\hat\theta_0]$:
    \begin{align}
        \hat\theta[\hat\theta_0]-\theta^\star
        &=\Big(\hat\theta_\mathrm{wSM}-\theta^\star\Big)+G^{-1}S_{\hat\theta_0}T_{\hat\theta_0}^{-1}\frac{1}{n}\sum_{i=1}^{n}\begin{pmatrix}\A^w_{\theta^\star}v_{\hat\theta_0,1}\\\vdots\\\A^w_{\theta^\star}v_{\hat\theta_0,K}\end{pmatrix} + \smallop\nonumber\\
        &=\Big(\hat\theta_\mathrm{wSM}-\theta^\star\Big)+G^{-1}S_{\hat\theta_0}T_{\hat\theta_0}^{-1}\frac{1}{n}\sum_{i=1}^{n}\begin{pmatrix}\A^w_{\theta^\star}\tilde{v}_1\\\vdots\\\A^w_{\theta^\star}\tilde{v}_K\end{pmatrix}\nonumber\\
        &\hphantom{=}-G^{-1}S_{\hat\theta_0}T_{\hat\theta_0}^{-1}F_{\hat\theta_0}G_{\hat\theta_0}^{-1}\frac{1}{n}\sum_{i=1}^{n}\begin{pmatrix}\A^w_{\theta^\star}(\nabla_{\M}\partial_{\theta_1}\log q_{\hat\theta_0})\\\vdots\\\A^w_{\theta^\star}(\nabla_{\M}\partial_{\theta_d}\log q_{\hat\theta_0})\end{pmatrix} + \smallop.\label{eq:decompose_smom_plg}
    \end{align}
    Consider Taylor expansion of $\partial_{\theta_j}\log q_{\hat\theta_0}$.
    Since the identity $\E_{\theta^\star}[\A^w_{\theta^\star}(\nabla_\M\partial_{\theta_j}\partial_{\theta_k}\log q_{\theta^\star})]=0$ holds under the regularity conditions, we have
    \begin{align}        
        &\frac{1}{n}\sum_{i=1}^{n}\A^w_{\theta^\star}(\nabla_{\M}\partial_{\theta_j}\log q_{\hat\theta_0})\nonumber\\
        &=\frac{1}{n}\sum_{i=1}^{n}\left(\A^w_{\theta^\star}(\nabla_{\M}\partial_{\theta_j}\log q_{\theta^\star})(X_i)+\sum_{k=1}^d\A^w_{\theta^\star}\left(\nabla_{\M}\partial_{\theta_j}\partial_{\theta_k}\log q_{\theta^\star}\right)(X_i)(\hat\theta_0-\theta^\star)_k\right) + \smallop[\norm{\hat\theta_0-\theta^\star}]\nonumber\\
        &=\frac{1}{n}\sum_{i=1}^{n}\A^w_{\theta^\star}(\nabla_{\M}\partial_{\theta_j}\log q_{\theta^\star})(X_i)+\smallop[1]\largeop[n^{-1/2}]+\smallop.\label{eq:fisherscore_expansion}
    \end{align}
    Substituting \eqref{eq:fisherscore_expansion} to \eqref{eq:decompose_smom_plg} and using continuous mapping theorem, we finally obtain
    \begin{align*}
        \hat\theta[\hat\theta_0]-\theta^\star
        &=\Big(\hat\theta_\mathrm{wSM}-\theta^\star\Big)+G^{-1}S_{\theta^\star}T_{\theta^\star}^{-1}\frac{1}{n}\sum_{i=1}^{n}\begin{pmatrix}\A^w_{\theta^\star}v_{\theta^\star,1}\\\vdots\\\A^w_{\theta^\star}v_{\theta^\star,K}\end{pmatrix} + \smallop\\
        &=\hat\theta[\theta^\star]-\theta^\star+\smallop,
    \end{align*}
    which completes the proof.
\end{proof}

Let $U_{\theta_0}\in\R^{d\times d}$ be the inner matrix whose $(j,k)$-th entry defined by
\[(U_{\theta_0})_{jk}\coloneqq\E_{\theta_0}\left[\A^w_{\theta_0}(\nabla_{\M}\partial_{\theta_j}\log q_{\theta_0})\A^w_{\theta_0}(\nabla_{\M}\partial_{\theta_k}\log q_{\theta_0})\right].\]
The asymptotic variance of the weighted score matching estimator is given by $\AVar[\hat\theta_\mathrm{wSM}]=G^{-1}U_{\theta^\star}G^{-1}$.
As the estimate of asymptotic relative efficiency $\AVar[\hat\theta[\hat\theta_0]]_{jj}/\AVar[\hat\theta_\mathrm{wSM}]_{jj}$, we employ
\begin{equation}
    \label{eq:estimate_improvement_M}
    1-\frac{\left(G_{\hat\theta_0}^{-1}S_{\hat\theta_0}T_{\hat\theta_0}^{-1}S_{\hat\theta_0}^{\top}G_{\hat\theta_0}^{-1}\right)_{jj}}{\left(G_{\hat\theta_\mathrm{wSM}}^{-1}U_{\hat\theta_\mathrm{wSM}}G_{\hat\theta_\mathrm{wSM}}^{-1}\right)_{jj}},\quad j=1,\dots,d.
\end{equation}

\section{Additional numerical experiments}
\label{sec:simu_appl_M}
In this appendix, we provide additional numerical experiments illustrating the SMoM estimator constructed in Theorem~\ref{thm:improve_M}.
We focus on estimating the parameters $\theta$.
The estimates $\hat\theta_\mathrm{wSM},\hat\theta[\theta^\star]$, and $\hat\theta[\hat\theta_\mathrm{wSM}]$ are calculated using an i.i.d.~sample of size $n$ drawn from the distribution with $\theta^\star$.
This procedure is iterated in 1000 times, and the MSE of each estimates are calculated.
The estimates of asymptotic relative efficiency given by \eqref{eq:estimate_improvement_M} are also calculated.
$\tilde{v}_1,\dots,\tilde{v}_K$ is constructed using neural networks.
Specifically, $\tilde{v}_\alpha$ is implemented as a neural network with five hidden layers, each consisting of three nodes, and $p$-dimensional input and output layers.
We employ $\tanh$ as the activation function, and all parameters are randomly initialized with values drawn from $N(0,1)$.
To ensure the output is a valid vector field on $\M$, the output vector is projected onto the tangent space $T_x\M$ via the orthogonal projection $\mathrm{P}_x$.
Since the estimates $\hat\theta[\theta^\star],\hat\theta[\hat\theta_\mathrm{wSM}]$ depend on the choice of $\tilde{v}_1,\dots,\tilde{v}_K$, we evaluate the performance over 10 different pairs of initializations.
The sample size $n$ is set to 100.
The expectations are approximated via Monte Carlo integration with 1000 samples.

\subsection{Matrix Bingham distribution}
Consider the Stiefel manifold $\mathbb{V}_{k,p}\coloneqq\{X\in\mathbb{R}^{p\times k}\mid X^\top X=I_k\}$.
The orthogonal projection is given by $\mathrm{P}_X(Z)=Z-X(X^\top Z+Z^\top X)/2$.
The matrix Bingham distribution \citep{Chikuse2003} has the following density:
\[q_{\theta}(X)=\frac{1}{Z(A)}\exp\left(\operatorname{tr}(X^\top AX)\right),\]
where $\theta=A$, $A\in\R^{p\times p}$ is a symmetric matrix.
For identifiability, we fix $A_{pp}=0$.

We set to $(p,k)=(3,2)$, $A^\star=\operatorname{diag}(1,1,0)$.
Since $\mathbb{V}_{3,2}$ is compact manifold without boundary, we use the weight function $w(X)=1$.
The number $K$ of orthogonal elements varies in $\{3,6,12,24\}$.
Table~\ref{tab:mbingham_improvement} shows that both of $\hat\theta[\theta^\star],\hat\theta[\hat\theta_\mathrm{SM}]$ does not improve the variance of the score matching estimator for all $K$.
Figure~\ref{fig:mbingham_improvement_estimate} shows that the estimate of asymptotic relative efficiency concentrate near 1; that is, it implies that the score matching estimator has little room for improvement.

\begin{table}[H]
    \centering
    \caption{MSE ratio of matrix Bingham distribution for $\hat\theta[\hat\theta_\mathrm{SM}]$, $\hat\theta[\theta^\star]$ relative to $\hat\theta_\mathrm{SM}$. The results are represented in the format: median (min, max) across different pairs of $\tilde{v}_\alpha$. A value smaller than 1 indicates better performance.}
    \begin{tabular}{lcll}
  \hline
  && $\hat\theta[\hat\theta_\mathrm{SM}]$ & $\hat\theta[\theta^\star]$ \\
  \hline
   $K=3$ & $A_{11}$ & $1.006 \,(0.990,1.016)$ & $1.006 \,(1.000,1.016)$ \\
   & $A_{22}$ & $1.007 \,(0.997,1.018)$ & $1.007 \,(1.005,1.022)$ \\
   & $A_{12}$ & $1.008 \,(1.000,1.016)$ & $1.008 \,(0.995,1.015)$ \\
   & $A_{13}$ & $1.006 \,(1.000,1.011)$ & $1.004 \,(0.993,1.015)$ \\
   & $A_{23}$ & $1.007 \,(1.000,1.014)$ & $1.009 \,(0.994,1.017)$ \\\hline
   $K=6$ & $A_{11}$ & $1.010 \,(1.001,1.015)$ & $1.012 \,(0.997,1.017)$ \\
   & $A_{22}$ & $1.014 \,(0.995,1.023)$ & $1.011 \,(1.005,1.018)$ \\
   & $A_{12}$ & $1.011 \,(0.996,1.016)$ & $1.011 \,(1.006,1.023)$ \\
   & $A_{13}$ & $1.011 \,(0.995,1.027)$ & $1.011 \,(0.993,1.018)$ \\
   & $A_{23}$ & $1.014 \,(0.999,1.032)$ & $1.017 \,(1.005,1.029)$ \\\hline
   $K=12$ & $A_{11}$ & $1.016 \,(1.003,1.025)$ & $1.023 \,(1.006,1.040)$ \\
   & $A_{22}$ & $1.020 \,(1.005,1.031)$ & $1.020 \,(0.998,1.026)$ \\
   & $A_{12}$ & $1.021 \,(1.005,1.047)$ & $1.026 \,(1.010,1.041)$ \\
   & $A_{13}$ & $1.021 \,(1.002,1.032)$ & $1.012 \,(1.004,1.038)$ \\
   & $A_{23}$ & $1.025 \,(1.014,1.032)$ & $1.033 \,(1.001,1.045)$ \\\hline
   $K=24$ & $A_{11}$ & $1.037 \,(1.011,1.051)$ & $1.046 \,(1.018,1.065)$ \\
   & $A_{22}$ & $1.039 \,(1.024,1.056)$ & $1.035 \,(1.016,1.055)$ \\
   & $A_{12}$ & $1.045 \,(1.023,1.065)$ & $1.038 \,(1.023,1.050)$ \\
   & $A_{13}$ & $1.041 \,(1.032,1.052)$ & $1.048 \,(1.027,1.095)$ \\
   & $A_{23}$ & $1.039 \,(1.026,1.072)$ & $1.051 \,(1.022,1.060)$ \\\hline
\end{tabular}

    \label{tab:mbingham_improvement}
\end{table}

\begin{figure}[H]
    \centering
    \includegraphics[width=\linewidth]{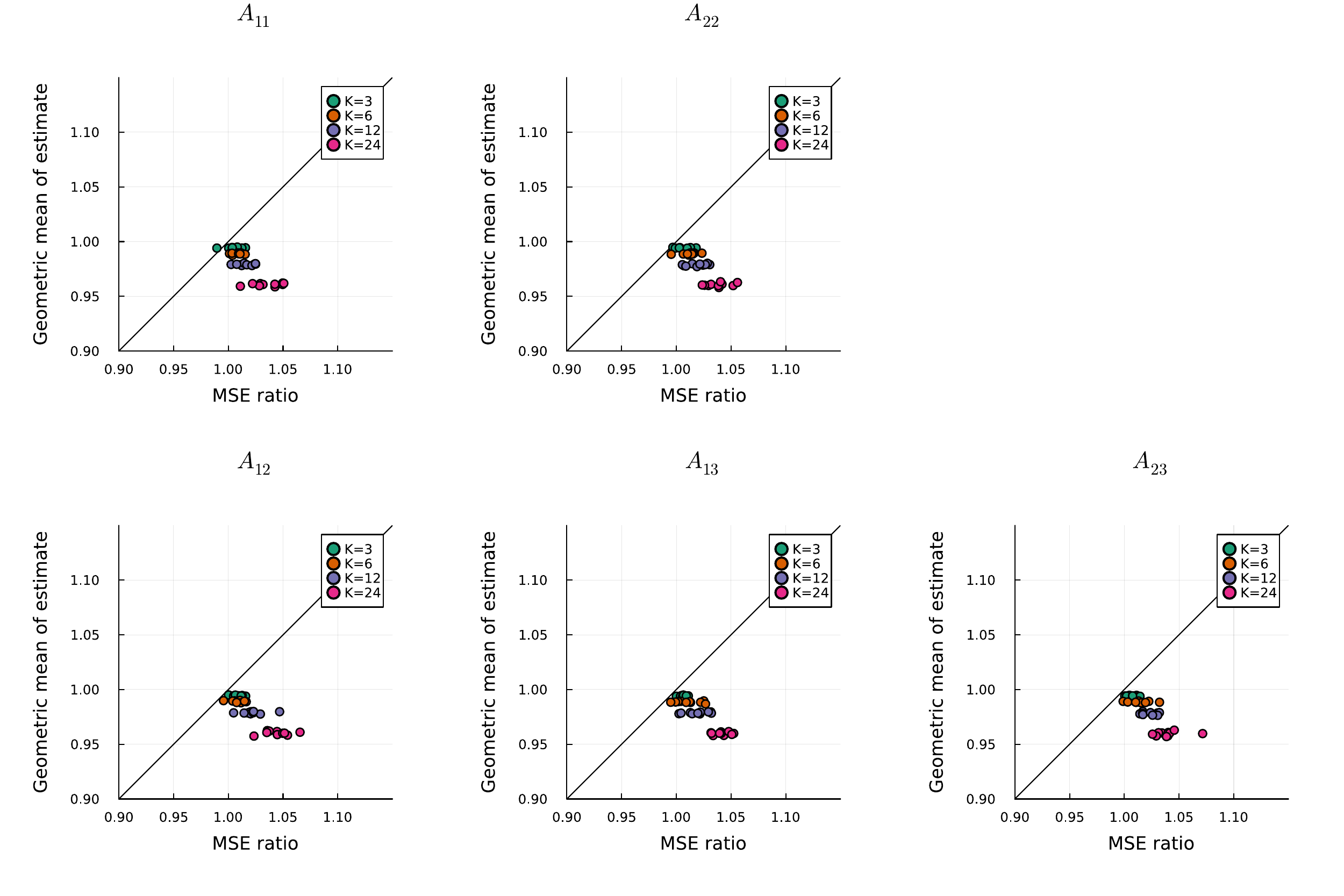}
    \caption{MSE ratio of matrix Bingham distribution for $\hat\theta[\hat\theta_\mathrm{SM}]$ relative to $\hat\theta_\mathrm{SM}$ versus the geometric mean of estimates given by \eqref{eq:estimate_improvement_M}. Values less than 1 on the horizontal axis indicate that corresponding SMoM estimator improves the variance of the score matching estimator. Points near the diagonal indicate that the estimate of the asymptotic relative efficiency is reliable.}
    \label{fig:mbingham_improvement_estimate}
\end{figure}

\end{document}